\newtheorem{thm}{Theorem}
\newtheorem{thmintro}{Theorem}
\newtheorem{prop}[thm]{Proposition}
\newtheorem{lemma}[thm]{Lemma}
\theoremstyle{definition}
\numberwithin{thm}{section}
\newcommand{\I}{\mathrm{I}}
\newcommand{\U}{\mathrm{U}}
\newcommand{\Qq}{\mathbb{C}(q^{1/2})}
\newcommand{\A}{\mathcal{A}}
\newcommand{\T}{\mathrm{T}}
\newcommand{\intU}{_\A\U}
\newcommand{\CU}{_{\mathbb{C}}\U}
\title{Quantum duality principle and quantum symmetric pairs}
\author[Jinfeng Song]{Jinfeng Song}
\address{Department of Mathematics, National University of Singapore, Singapore.}
\email{j\_song@u.nus.edu}
\begin{document}

\maketitle

\begin{abstract}

The \emph{quantum duality principle} (QDP) by Drinfeld predicts a connection between the \emph{quantized universal enveloping algebras} and the \emph{quantized coordinate algebras}, where the underlying classical objects are related by the duality in Poisson geometry. The current paper gives an explicit formulization of the QDP for quantum symmetric pairs.

Let $\mathfrak{g}$ be a complex semi-simple Lie algebra, equipped with the standard Lie bialgebra structure. Let $\theta$ be a Lie algebra involution on $\mathfrak{g}$ and denote by $\mathfrak{k}=\mathfrak{g}^\theta$ the fixed point subalgebra. The quantum symmetric pair $(\U,\U^\imath)$ is originally defined to be a quantization of the symmetric pair of the universal enveloping algebras $(U(\mathfrak{g}),U(\mathfrak{k}))$. In this paper, we show that an explicit specialisation of $(\U,\U^\imath)$ gives rise to the pair of the coordinate algebras $(\mathcal{O}(G^*),\mathcal{O}(K^\perp\backslash G^*))$, where $G^*$ is the dual Poisson-Lie group with the Lie algebra $\mathfrak{g}^*$, and $K^\perp\backslash G^*$ is a $G^*$-Poisson homogeneous space. Here $K^\perp$ is the closed subgroup of $G^*$associated to the complementary dual of $\mathfrak{k}$. Therefore $(\U,\U^\imath)$ can be viewed as a pair of quantized coordinate algebras. This generalises the well-known fact that the quantum group $\U$ provides a quantization of the coordinate algebra $\mathcal{O}(G^*)$.


\end{abstract}


\section{Introduction} 

\subsection{}
Let $G$ be a complex semi-simple group, equipped with the standard multiplicative Poisson structure. Let $\U=U_q\mathfrak{g}$ be the Drinfeld--Jimbo quantum group, which is a Hopf algebra over the base field $\Qq$. When viewed as the quantized universal enveloping algebra of $\mathfrak{g}$, the algebra $\U$ is a central object in representation theory. 

One can also view $\U$ as the quantized coordinate algebra on the dual Poisson group $G^*$ as follows. Let $\A=\mathbb{C}[q^{1/2},q^{-1/2}]$ be the subring of the base field $\Qq$. In comparison with Lusztig's integral form \cite{Lu93}*{3.1.13}, one can construct a non-standard $\A$-form $_\A\U$ of  $\U$ (see \S \ref{sec:rd}), where the base change $_\mathbb{C}\U=\mathbb{C}\otimes_\A{}_\A\U$ becomes a commutative algebra. Moreover $_\mathbb{C}\U$ carries a Poisson algebra structure where the Poisson brackets are induced by taking the commutators in $_\A\U$ (see \eqref{eq:poi}). 

The work of De Concini--Kac--Procesi\footnote{The work \cite{DCKP} deals with the quantum algebra at roots of unity. Similar proof can be applied to the current setting.} \cite{DCKP}*{Theorem 7.6} gives an explicit Poisson algebra isomorphism 
\begin{equation}\label{iso}
    \varphi:{}_\mathbb{C}\U\overset{\sim}{\longrightarrow}\mathcal{O}(G^*).
\end{equation}
We recall the map $\varphi$ in \S \ref{sec:sp}. Let us mention that although the QDP suggests that $\U$ shall provide a quantization of $\mathcal{O}(G^*)$, it takes much more efforts to explicitly establish the isomorphism \eqref{iso} (see \cite{DCKP}).


\subsection{}

Let $\theta$ be a complex group involution on $G$, and $K=G^\theta$ be the $\theta$-fixed point subgroup. Let $\mathfrak{g}$ and $\mathfrak{k}$ be the Lie algebras of $G$ and $K$, respectively. Associated with the symmetric pair $(\mathfrak{g},\mathfrak{k})$, the \emph{quantum symmetric pair} $(\U,\U^\imath)$ is the pair which consists of the quantum group $\U$, and a coideal subalgebra $\U^\imath$, called an \emph{$\imath$quantum group} \cites{Let02,BW18}. As a quantization of the universal enveloping algebra of $\mathfrak{k}$ inside $\U$, the algebra $\U^\imath$ has found important applications in representation theory and algebraic geometry \cites{BS22,BS24,BW}.

This paper study the $\imath$quantum group $\U^\imath$ from the another point of view of quantized coordinate algebras. Let $_\A\U^\imath=\U^\imath\cap{}_\A\U$ be the $\A$-form of $\U^\imath$ induced from the non-standard integral form on $\U$. This integral form is essentially different from the one studied by Bao and Wang \cite{BW18}. Let $_\mathbb{C}\U^\imath=\mathbb{C}\otimes_\A{}_\A\U^\imath$ be the commutative Poisson algebra obtained by the specialisation $q^{1/2}\mapsto 1$, and let $_\mathbb{C}\iota:{}_\mathbb{C}\U^\imath\rightarrow{}_\mathbb{C}\U$ be the Poisson algebra homomorphism obtained by the base change of the natural embedding $\iota:{}_\A\U^\imath\rightarrow{}_\A\U$. 

Let $\mathfrak{k}^\perp$ be the space of linear forms on $\mathfrak{g}$ which vanish on $\mathfrak{k}$. We shown (Proposition \ref{prop:coiso}) that $\mathfrak{k}^\perp$ is moreover a Lie subalgebra of $\mathfrak{g}^*$. Let $K^\perp$ be the connected closed subgroup of $G^*$ associated with $\mathfrak{k}^\perp$. The affine quotient $K^\perp\backslash G^*$ is automatically a $G^*$-Poisson homogeneous space, whose coordinate algebra is isomorphic to the Poisson subalgebra of $K^\perp$-invariant functions on $G^*$.


The main theorem of the paper is the following. 

\begin{thmintro}\label{main}
There is a unique Poisson algebra isomorphism
\begin{equation}
    \varphi^\imath:{}_\mathbb{C}\U^\imath\overset{\sim}{\longrightarrow}{}\mathcal{O}(K^\perp\backslash G^*),
\end{equation} 
such that the diagram
    \begin{equation}
        \begin{tikzcd}
            & \mathcal{O}(K^\perp\backslash G^*) \arrow[r,hook] & \mathcal{O}(G^*) \\
            & _\mathbb{C}\U^\imath \arrow[u,"\varphi^\imath"] \arrow["_\mathbb{C}\iota"',r] & {}_\mathbb{C}\U \arrow[u,"\varphi"']
        \end{tikzcd}
    \end{equation}
    commutes.     
\end{thmintro}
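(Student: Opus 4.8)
\emph{Reduction and uniqueness.} The uniqueness is immediate: the bottom-right horizontal arrow $\mathcal{O}(K^\perp\backslash G^*)\hookrightarrow\mathcal{O}(G^*)$ is injective, so commutativity of the square forces $\varphi^\imath$ to be the corestriction of $\psi:=\varphi\circ{}_{\mathbb{C}}\iota\colon\CUi\to\mathcal{O}(G^*)$. Hence the whole content of Theorem~\ref{main} is that (i) $\psi$ is injective, and (ii) its image is exactly the subalgebra $\mathcal{O}(G^*)^{K^\perp}\cong\mathcal{O}(K^\perp\backslash G^*)$ of $K^\perp$-invariant functions. Granting (i) and (ii) one defines $\varphi^\imath$ by corestriction; it is automatically a Poisson algebra isomorphism because $\psi$ is a Poisson algebra homomorphism and $\varphi$ is a Poisson algebra isomorphism by \eqref{iso}.

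\emph{Injectivity.} For (i), the plan is an integral-form/PBW argument. Using the triangular (``quantum Iwasawa'') factorisation of $\U$ relative to $\U^\imath$, the multiplication map realises $\U$ as $\U^\imath$ tensored with a complementary Borel-type subalgebra; I would check that this factorisation is compatible with the non-standard $\A$-forms, so that $\intU\cong\intUi\otimes_\A{}_\A(\text{complement})$ as $\A$-modules. Since $\A$ is a principal ideal domain and all modules involved are free (PBW bases), this splitting is preserved by $-\otimes_\A\mathbb{C}$, so ${}_{\mathbb{C}}\iota$, and hence $\psi$, is injective.

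\emph{Identifying the image.} For (ii), I would first obtain the inclusion $\mathrm{Image}(\psi)\subseteq\mathcal{O}(G^*)^{K^\perp}$ conceptually: $\U^\imath$ is a coideal subalgebra, so $\Delta(\intUi)\subseteq\intUi\otimes\intU$ after intersecting with the integral form, and this relation survives specialisation; since $\varphi$ intertwines the comultiplication of $\CU$ inherited from $\U$ with the coproduct on $\mathcal{O}(G^*)$ induced by the group law of $G^*$ (this is implicit in, or a mild extension of, De Concini--Kac--Procesi), the image $A:=\mathrm{Image}(\psi)$ is a Poisson coideal subalgebra of $\mathcal{O}(G^*)$, hence consists of functions invariant under a connected closed subgroup $H\subseteq G^*$. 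To pin down $H$, I would compute the stabiliser Lie algebra $\mathrm{Lie}(H)=\{X\in\mathfrak{g}^*:\text{the invariant vector field }\xi_X\text{ annihilates }A\}$ by applying $\varphi$ to the classical limits of the standard generators of $\U^\imath$ (the $B_i$ together with the toral generators), obtaining explicit functions on $G^*$, and checking that the vector fields killing all of them form precisely $\mathfrak{k}^\perp$; here Proposition~\ref{prop:coiso} is what guarantees that $\mathfrak{k}^\perp$ is a Lie subalgebra and that $K^\perp=H$ once the Lie algebras agree and both groups are connected. The reverse inclusion $\mathrm{Image}(\psi)\supseteq\mathcal{O}(K^\perp\backslash G^*)$, i.e.\ surjectivity of $\varphi^\imath$, I expect to obtain by a graded-rank comparison: filtering $\CUi$ by a PBW filtration and $\mathcal{O}(G^*)$ by the filtration attached to the standard factorisation $G^*\cong N_-\times\mathbb{T}\times N_+$, the map $\psi$ becomes filtered, and its associated graded is an injection of (Laurent-)polynomial rings; matching the Hilbert series of $\mathrm{gr}\,\CUi$ and of $\mathrm{gr}\,\mathcal{O}(K^\perp\backslash G^*)$ then forces surjectivity, whence $A=\mathcal{O}(K^\perp\backslash G^*)$.

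\emph{Main obstacle.} The crux is the surjectivity in step (ii): controlling the size of $\mathrm{Image}(\psi)$ and matching it with the full invariant subalgebra requires a genuinely good PBW theory for the non-standard integral form $\intUi$ (which is not the Bao--Wang form) together with an explicit enough handle on $\mathcal{O}(K^\perp\backslash G^*)$ --- for instance an $\imath$-analogue of the factorisation of $G^*$ --- to identify the two Hilbert series. By contrast, the injectivity (i) and the inclusion $\mathrm{Image}(\psi)\subseteq\mathcal{O}(G^*)^{K^\perp}$ should be routine once compatible integral forms and the coproduct-compatibility of $\varphi$ are in place.
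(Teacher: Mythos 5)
Your reduction to (i) injectivity and (ii) identification of the image is the right frame, and your uniqueness argument matches the paper's. But the two steps you flag as ``routine'' are exactly where the real difficulty sits, and your plan for them does not close. For injectivity you invoke a quantum Iwasawa factorisation of $\U$ over $\U^\imath$ and then say you would ``check that this factorisation is compatible with the non-standard $\A$-forms.'' That compatibility is the whole problem: since $\intUi=\U^\imath\cap\intU$ is defined by intersection, there is no a priori PBW basis for it, and an $\A$-lattice obtained this way can perfectly well degenerate under $q^{1/2}\mapsto 1$ (in the paper's notation, the scalars $d_b$ normalising a pulled-back basis could vanish at $q^{1/2}=1$, killing injectivity of the specialised map). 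The paper rules this out only \emph{a posteriori}: it first proves surjectivity of ${}_\mathbb{C}\pi\circ{}_\mathbb{C}\iota$ onto ${}_\mathbb{C}\U_P$ (step (b) of its proof), and only then concludes $d_b(1)\neq 0$, hence injectivity (step (c)). So injectivity is a \emph{consequence} of the surjectivity you call the main obstacle; your proposed order of argument cannot be run. Likewise, your ``conceptual'' inclusion $\mathrm{Image}(\psi)\subseteq\mathcal{O}(G^*)^{K^\perp}$ via the coideal property needs the unproven facts that $\Delta$ interacts well with the non-standard integral form and that a Poisson coideal subalgebra of $\mathcal{O}(G^*)$ is the full invariant algebra of a connected closed subgroup; the paper never argues this way, and instead obtains this inclusion for free at the end from a sandwich argument.

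The paper's actual mechanism, which your proposal is missing, is an explicit set of Poisson generators of $\mathcal{O}(K^\perp\backslash G^*)$ (its Lemma \ref{le:pg}): using the decomposition $G^*\cong K^\perp\times P$ (Proposition \ref{prop:dec}) one computes the pullbacks $\pi_2^*$ of coordinate functions on $P$ and finds they are precisely the specialisations of the generators $B_i$, $k_i$, $E_i$, $K_i^{\pm1}$ of $\U^\imath$; this gives $\mathrm{Image}(\psi)\supseteq\mathcal{O}(K^\perp\backslash G^*)$ directly, i.e.\ the containment you hoped to extract from a Hilbert-series comparison. Everything else then follows by comparing both algebras inside $\mathcal{O}(P)$ via $\iota^*$ and its quantisation $\pi$ (the Letzter map to the partial parabolic $\U_P$, which is a linear isomorphism by Kolb--Yakimov). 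Your Hilbert-series strategy for surjectivity is not obviously wrong, but it is left entirely unexecuted, and without it neither your step (i) nor your step (ii) is established; as written the proposal has a genuine gap at its load-bearing point.
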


Our construction of $\varphi^\imath$ is explicit, and hence it is suitable for applications.
 

\subsection{}

The Poisson structures $K^\perp\backslash G^*$ occur in many fields of mathematics and physics (\cites{Bo,Bon,BGM}). Therefore our result provides a bridge to utilize the theory of quantum symmetric pairs to study such Poisson structures.  As an example, the Dubrovin--Ugaglia Poisson structures on the space of $(n\times n)$-upper triangular matrices with 1 on the diagonal appear in the study of Frobenius manifolds \cites{Du,Ug}, and are related to the connections on the Riemann surfaces as well as Poisson--Lie groups by the work of Boalch \cites{Bo,Bo2}. In \S \ref{ap} we see that the $\imath$quantum group associated to the symmetric pair $(\mathfrak{sl}_n,\mathfrak{so}_n)$ is a quantization of the Dubrovin--Ugaglia Poisson structure, and the well-known braid group symmetries on such Poisson structure can be obtained from the braid group symmetries on the corresponding $\imath$quantum group via specialisation.  



Viewing $\U$ as the quantized coordinate algebra is the fundamental philosophy to relate cluster algebras with quantum groups \cites{SS,Sh22}. Based on the same point of view, in the work \cite{So} the author gave a cluster algebra realisation of the $\imath$quantum group associated to $(\mathfrak{sl}_n,\mathfrak{so}_n)$. It is desirable to generalise the construction to other types. Our result provides the geometric foundation in this direction.

\subsection{}
In \cites{CG06}, Ciccoli and Gavarini generalised Drinfeld's QDP to the setting of subgroups and homogeneous spaces. However, like the work of Drinfeld \cite{Dr86}, the construction in \cites{CG06} is based on the formal power series setting and hence cannot be applied to our situation. In their subsequent paper \cite{CG14}, by imposing various technical assumptions, the authors deal with the quantum algebras over the ordinary fractional field, which they call the global version of the QDP. Their results are general but abstract, and it is difficult to check those assumptions for explicit examples. Our result fits into the general philosophy of their QDP, but our proof is independent of theirs.

\vspace{.2cm}\noindent {\bf Acknowledgement: }The author is supported by Huanchen Bao’s MOE grant A-0004586-00-00 and A-0004586-01-00.

\section{Preliminaries}\label{sec:pre}

In this section, we recall basic construction on quantum groups, quantum symmetric pairs, (dual) Poisson groups and the specialisations of quantum algebras.

\subsection{Quantum groups}\label{sec:rd}

Fix a finite index set $\I$, and a Cartan matrix $A=(a_{ij})_{i,j\in\I}$. Let $D=diag(\epsilon_i)_{i\in\I}$ be the diagonal matrix such that $DA$ is symmetric positive definite, with $\epsilon_i\in\mathbb{Z}_{>0}$ and $\{\epsilon_i\mid i\in\I\}$ relatively prime. 

Let $\mathfrak{g}$ be the complex semi-simple Lie algebra associated with the Cartan matrix $A$, and $\mathfrak{h}$ be a Cartan subalgebra of $\mathfrak{g}$. Take Borel subalgebras $\mathfrak{b}^+$ and $\mathfrak{b}^-$ of $\mathfrak{g}$, such that $\mathfrak{b}^+\cap\mathfrak{b}^-=\mathfrak{h}$. Denote by $\Phi\subset \mathfrak{h}^*$ the root system of $\mathfrak{g}$ relative to $\mathfrak{h}$. Let $\mathcal{R}^+\subset\Phi$ be the set of positive roots associated with $\mathfrak{b}^+$, and $\Delta=\{\alpha_i\mid i\in\I\}$ be the subset of simple roots. Set $\mathcal{R}^-=-\mathcal{R}^+$. Let $$\mathfrak{g}=\mathfrak{h}\oplus\bigoplus_{\alpha\in\Phi}\mathfrak{g}_\alpha$$ be the root space decomposition. Set $\mathfrak{u}^+=\oplus_{\alpha\in\mathcal{R}^+}\mathfrak{g}_\alpha$ and $\mathfrak{u}^-=\oplus_{\alpha\in\mathcal{R}^-}\mathfrak{g}_{\alpha}$. Let $Q$ be the root lattice, and $W$ be the Weyl group of $\mathfrak{g}$ with generators $\{s_i\mid i\in\I\}$. The group $W$ acts on $Q$ in a natural way. For an element $w$ in $W$, let $|w|$ be the length of $w$. Let $w_0$ be the longest element in $W$, and write $n=|w_0|$.






Let $q$ be an indeterminate, and $q^{1/2}\in\overline{\mathbb{Q}(q)}$ be a fixed square root of $q$. Let $q_i=q^{\epsilon_i}$, for $i\in\I$. Let $\U$ be the Drinfeld--Jimbo quantum group associated with $\mathfrak{g}$ over the base field $\Qq$ (cf. \cite{Jan96}*{4.3}). Let $\varepsilon:\U\rightarrow\Qq$ be the counit. For $i\in\I$, let $\mathbf{E}_i$, $\mathbf{F}_i$ and $\mathbf{K}_i$ be the generators as in \emph{loc. cit.} For our purpose, we consider the rescaled generators 
$$E_i= q_i^{1/2}(q_i^{-1}-q_i)\mathbf{E}_i,\quad F_i= q_i^{-1/2}(q_i-q_i^{-1})\mathbf{F_i},\quad K_i= \mathbf{K}_i,\quad \text{for }i\in\I.$$

	


Let $\U^-$ (resp., $\U^+$) be the unital $\Qq$-subalgebra of $\U$ generated by $F_i$ (resp., $E_i$), for various $i\in \I$. Let $\U^0$ be the unital $\Qq$-subalgebra of $\U$ generated by $K_i^{\pm1}$, for $i\in \I$. For any $\mu=\sum_{i\in\I}a_i\alpha_i\in Q$, we write $K_\mu=\prod_{i\in\I}K_i^{a_i}$. 

For $i\in\I$, let $\T_i$ be the $\Qq$-algebra automorphism of $\U$, denoted by $T_{i,-1}''$ in \cite{Lu93}*{37.1.3}. It is known that automorphisms $\{\T_i\mid i\in\I\}$ satisfy the braid group relations. For $w\in W$, we write $\T_w=\T_{i_1}\T_{i_2}\cdots\T_{i_r}$, where $w=s_{i_1}s_{i_2}\cdots s_{i_r}$ and $r=|w|$.

Let $\mathbf{i}=(i_1,i_2,\cdots,i_n)\in \I^n$ be a reduced expression of $w_0$. For $\mathbf{a}=(a_1,\cdots,a_n)\in\mathbb{N}^n$, set 
\begin{equation*}
\begin{split}
    &F_{\mathbf{i}}(\mathbf{a})=E_{i_1}^{a_1}\T_{i_1}(E_{i_2}^{a_2})\cdots \T_{i_1}\cdots\T_{i_{n-1}}(E_{i_n}^{a_n}),\\
    &E_{\mathbf{i}}(\mathbf{a})=F_{i_1}^{a_1}\T_{i_1}(F_{i_2}^{a_2})\cdots \T_{i_1}\cdots\T_{i_{n-1}}(F_{i_n}^{a_n}).
\end{split}
\end{equation*}
For $1\leq k\leq n$, we write $F_{\mathbf{i},k}=F_{\mathbf{i}}(\mathbf{e}_k)$ and $E_{\mathbf{i},k}=E_{\mathbf{i}}(\mathbf{e}_k)$, where $\mathbf{e}_k=(\delta_{kj})_{j=1}^n\in\mathbb{N}^n$.

Set $\A=\mathbb{C}[q^{1/2},q^{-1/2}]$ to be the subring of $\Qq$. Let $\intU^-$ (resp., $\intU^+$) be the $\A$-submodule of $\U^-$ (resp., $\U^+$), spanned by $F_{\mathbf{i}}(\mathbf{a})$ (resp., $E_{\mathbf{i}}(\mathbf{a})$), for various $\mathbf{a}\in\mathbb{N}^n$. It is known that $\intU^-$ and $\intU^+$ are independent of the choice of $\mathbf{i}$, and are closed under multiplications. Let $\intU^0$ be the unital $\A$-subalgebra of $\U^0$, generated by $K_i^{\pm1}$, for $i\in\I$. Let $\intU$ be the $\A$-subalgebra of $\U$, generated by $\intU^-$, $\intU^0$ and $\intU^+$. It is different from the integral form defined by Lusztig \cite{Lu93}*{3.1.13}, and is the same as the one studied by Berenstein--Greenstein \cite{BG17}. By \cite{BG17}*{Corollary 3.13 \& Theorem 3.11}, the multiplication gives an isomorphism as $\A$-modules
\begin{equation}\label{eq:tt}
\intU\cong{} \intU^+\otimes{}{\intU^0}\otimes{} {\intU^-}.
\end{equation}
Here the tensor products are over $\A$. It is easy to see that the $\A$-subalgebra $_\A\U$ is invariant under the braid group action $\T_i$, for $i\in\I$.

\subsection{Quantum symmetric pairs}\label{sec:SD}

Let $(\I=\I_{\circ}\sqcup\I_{\bullet},\tau)$ be a Satake diagram (cf. \cite{BW18}). Recall that $\tau$ is a graph involution on $\I$, which leaves $\I_\circ$ and $\I_\bullet$ invariant. We fix once for all a subset $\I'_\circ\subset \I_\circ$, consisting of exactly one element in each $\tau$-orbits of $\I_\circ$. Let $\Phi_\bullet\subset\Phi$ be the sub-root system with simple roots $\{\alpha_i\mid i\in\I_\bullet\}$, and $W_\bullet=\langle s_i\mid i\in\I_\bullet\rangle$ be the parabolic subgroup. Let $w_\bullet$ be the longest element in $W_\bullet$. Let $\mathcal{R}_\bullet^+=\mathcal{R}^+\cap\Phi_\bullet$ and $\mathcal{R}_\bullet^-=\mathcal{R}^-\cap\Phi_\bullet$. Let $\theta=-w_\bullet\tau:Q\rightarrow Q$ be the group involution on $Q$, and $Q^\theta=\{\mu\in Q\mid \theta(\mu)=\mu\}$. 


We fix parameters $\varsigma_i\in \pm q^{\mathbb{Z}}$, for $i\in\I_\circ$, which satisfy the conditions in \cite{BW18}*{Definition 3.5}. The associated \emph{$\imath$quantum group} $\U^\imath$ is the unital $\Qq$-subalgebra of $\U$, generated by the following elements,
\begin{align*}
&B_i=F_i-\varsigma_i\T_{w_\bullet}(E_{\tau i})K_i^{-1},\; k_i=K_iK_{\tau i}^{-1},\qquad\text{for } i\in \I_\circ;\\&  E_i,\;F_i,\;K_i^{\pm1}\qquad\text{for } i\in \I_\bullet.
\end{align*}
We write $B_i=F_i$, for $i\in\I_\bullet$. The pair $(\U,\U^\imath)$ is called a \emph{quantum symmetric pair}.

\subsection{Poisson structures}\label{eq:ngp}

For any complex affine algebraic variety $V$, we write $\mathcal{O}(V)$ to denote the $\mathbb{C}$-algebra of regular functions on $V$. 

Let $G$ be the complex semi-simple adjoint group with the Lie algebra $\mathfrak{g}$. Let $B^+$, $B^-$ and $H$ be the connected closed subgroups of $G$, whose Lie algebras are $\mathfrak{b}^+$, $\mathfrak{b}^-$ and $\mathfrak{h}$, respectively. Let $U^+$ (resp., $U^-$) be the unipotent radical of $B^+$ (resp., $B^-$). For $\alpha\in\Phi$, let $U_\alpha$ be the root subgroup of $G$ associated with $\alpha$.


Let us briefly recall the standard Poisson structure associated with $G$, and the dual Poisson group $G^*$. We refer to \cite{CP}*{\S 1} for a detailed exposition. Let $\langle\,,\,\rangle$ be the Killing form of $\mathfrak{g}$, and $\mathfrak{g}^*$ be the space of $\mathbb{C}$-linear forms on $\mathfrak{g}$. The \emph{Drinfeld double} $\mathfrak{d}=\mathfrak{g}\oplus\mathfrak{g}$ admits a non-degenerate invariant bilinear form $\langle\langle\,,\,\rangle\rangle$, given by 
\begin{equation*}
    \langle\langle(X_1,Y_1),(X_2,Y_2)\rangle\rangle=\langle X_1,X_2\rangle-\langle Y_1,Y_2\rangle,\quad \forall\, X_1,X_2,Y_1,Y_2\in\mathfrak{g}.
\end{equation*}

Let us identify $\mathfrak{g}$ with the diagonal Lie subalgebra of $\mathfrak{d}$, and identify $\mathfrak{g}^*$ with another Lie subalgebra,
\begin{equation}\label{eq:dg}
    \mathfrak{g}^*\cong\{(X,Y)\in\mathfrak{b}^+\times\mathfrak{b}^-\mid \text{the $\mathfrak{h}$-component of $X+Y$ is 0}\},
\end{equation}
via the bilinear form $\langle\langle\,,\,\rangle\rangle$. Under these identifications the triple $(\mathfrak{d},\mathfrak{g},\mathfrak{g}^*)$ forms a \emph{Manin triple}, which determines a \emph{Lie bialgebra} structure on $\mathfrak{g}$, and determines the \emph{standard Poisson structure} on $G$. 

By a dual construction, one gets a Lie bialgebra structure on $\mathfrak{g}^*$, which determines a Poisson structure on the \emph{dual Poisson group} $G^*$, where $$G^*=\{(u_+t,t^{-1}u_-)\mid u_\pm\in U^\pm,t\in H\}$$ is the connected closed subgroup of $B^+\times B^-$ with the Lie algebra $\mathfrak{g}^*$.

We have an isomorphism as varieties
\begin{equation}\label{igd}
    U^+\times H\times U^-\overset{\sim}{\longrightarrow} G^*,\qquad (u_+,t,u_-)\mapsto (u_+t,t^{-1}u_-),
\end{equation}
which determines an isomorphism as $\mathbb{C}$-algebras
\begin{equation}\label{eq:og}
    \mathcal{O}(G^*)\cong\mathcal{O}(U^+)\otimes\mathcal{O}(H)\otimes\mathcal{O}(U^-).
\end{equation}

\subsection{Specialisation of quantum algebras}\label{eq:sqa}

Let $R_q$ be a non-commutative unital $\A$-algebra, which satisfies the condition
\begin{equation}\label{eq:con}
    [R_q,R_q]\subseteq (q^{1/2}-1)R_q.
\end{equation}
Let $R=\mathbb{C}\otimes_{\A}R_q$ where the tensor product is given by $q^{1/2}\mapsto 1$. The condition \eqref{eq:con} is equivalent to saying that the $\mathbb{C}$-algebra $R$ is commutative. Denote by $f\mapsto \overline{f}$ the canonical map $R_q\rightarrow R$. Then $R$ carries a Poisson bracket $\{\,,\,\}:R\times R\rightarrow R$, defined by 
\begin{equation}\label{eq:poi}
    \{\overline{f},\overline{g}\}=\overline{\frac{1}{2(q^{1/2}-1)}(fg-gf)},\quad \forall f,g\in R_q.
\end{equation}

\subsection{Specialisation of $_\A\U$}\label{sec:sp}
Let us fix a pinning $\{x_i,y_i\mid i\in \I\}$ of the group $G$, where $x_i:\mathbb{C}\rightarrow U_{\alpha_i}$ and $y_i:\mathbb{C}\rightarrow U_{-\alpha_i}$ are one-parameter subgroups.  
For $i\in\I$, set 
\begin{equation*}
    \dot{s_i}=x_i(1)y_i(-1)x_i(1)\in G.
\end{equation*}
It is well-known that elements $\{\dot{s}_i\mid i\in\I\}$ satisfy the braid group relations. For $w\in W$, denote by $\dot{w}=\dot{s}_{i_1}\cdot\dot{s}_{i_2}\cdots\dot{s}_{i_k}$, for one (and hence for all) reduced expression $w=s_{i_1}s_{i_2}\cdots s_{i_k}$.

Take a reduced expression $\mathbf{i}=(i_1,\cdots, i_n)$ of the longest $w_0$ of $W$. For $1\leq k\leq n$, set $\beta_{\mathbf{i},k}=s_{i_1}\cdots s_{i_{k-1}}(\alpha_{i_k})\in \mathcal{R}^+$. Set $x_{\mathbf{i},k}=\text{Ad}_{\dot{s}_{i_1}\cdots\dot{s}_{i_{k-1}}}\circ x_{i_k}:\mathbb{C}\rightarrow U_{\beta_{\mathbf{i},k}}$, and $y_{\mathbf{i},k}=\text{Ad}_{\dot{s}_{i_1}\cdots\dot{s}_{i_{k-1}}}\circ y_{i_k}:\mathbb{C}\rightarrow U_{-\beta_{\mathbf{i},k}}$. We have $x_{\mathbf{i},k}=x_j$ and $y_{\mathbf{i},k}=y_j$, if $\beta_{\mathbf{i},k}=\alpha_j$, for some $j\in\I$.

By \cite{Spr09}*{Lemma 8.3.5}, elements $a\in U^+$, $b\in U^-$ can be factored as
\begin{equation}\label{eq:fac}
    a=x_{\mathbf{i},n}(a_n)x_{\mathbf{i},n-1}(a_{n-1})\cdots x_{\mathbf{i},1}(a_1)\quad \text{and}\quad b=y_{\mathbf{i},1}(b_1)y_{\mathbf{i},{2}}(b_{2})\cdots y_{\mathbf{i},n}(b_n),
\end{equation}
for unique tuples $(a_1,\cdots,a_n)\in\mathbb{C}^n$ and $(b_1,\cdots,b_n)\in\mathbb{C}^n$. We define $\chi_{\mathbf{i},k}^+\in\mathcal{O}(U^+)$ by $\chi_{\mathbf{i},k}^+(a)=a_k$, and define $\chi_{\mathbf{i},k}^-\in\mathcal{O}(U^-)$ by $\chi_{\mathbf{i},k}(b)=b_k$. If $\beta_{\mathbf{i},k}=\alpha_j$ is a simple root, elements $\chi^+_{\mathrm{i},k}$ and $\chi_{\mathrm{i},k}^-$ are independent of the reduced expression, in which case we denote by $\chi^+_j=\chi^+_{\mathrm{i},k}$ and $\chi_j^-=\chi_{\mathrm{i},k}^-$.

Recall the $\A$-algebras $_\A\U$, $_\A\U^+$, $_\A\U^-$ and $_\A\U^0$ in \S \ref{sec:rd}. Let $_\mathbb{C}\U$, $_\mathbb{C}\U^+$, $_\mathbb{C}\U^-$ and $_\mathbb{C}\U^0$ be the $\mathbb{C}$-algebras obtained by the base change $q^{1/2}\mapsto 1$ of the corresponding $\A$-algebras, which are known to be commutative \cite{BG17}*{Corollary 3.15}. Moreover the isomorphism \eqref{eq:tt} induces the isomorphism $_\mathbb{C}\U\cong{}_\mathbb{C}\U^+\otimes{}_\mathbb{C}\U^0\otimes{}_\mathbb{C}\U^-$ as $\mathbb{C}$-algebras. Here tensor products are over $\mathbb{C}$.

Take a reduced expression $\mathbf{i}$ of $w_0$. One has $\mathbb{C}$-algebra isomorphisms
\begin{equation}\label{eq:sp}
    \varphi^+:{}_\mathbb{C}\U^+\overset{\sim}{\longrightarrow}\mathcal{O}(U^+)\qquad\text{and}\qquad\varphi^-:{}_\mathbb{C}\U^-\overset{\sim}{\longrightarrow}\mathcal{O}(U^-),
\end{equation}
given by $\varphi^+(\overline{E_{\mathbf{i},k}})=\chi_{\mathbf{i},k}^+$ and $\varphi^-(\overline{F_{\mathbf{i},k}})=\chi_{\mathbf{i},k}^-$, for $1\leq k\leq n$. In particular $\varphi^+(\overline{E_i})=\chi_i^+$ and $\varphi^-(\overline{F_i})=\chi_i^-$, for $i\in\I$. The maps $\varphi^\pm$ are independent of the choice of $\mathbf{i}$.

Since $G$ has trivial center, one has the canonical $\mathbb{C}$-algebra isomorphism $\mathcal{O}(H)\cong \mathbb{C}[Q]$, which induces the $\mathbb{C}$-algebra isomorphism $\varphi^0:{}_\mathbb{C}\U^0\overset{\sim}{\longrightarrow}\mathcal{O}(H)$, given by $\varphi^0(\overline{K_\mu})=\mu$, for $\mu\in Q$.




The following theorem asserts that $_\A\U$ specializes to the coordinate algebra $\mathcal{O}(G^*)$. The proof essentially follows from the work of De Concini--Kac--Procesi.

\begin{thm}[\cite{DCKP}*{Theorem 7.6}]\label{thm:phi}
The map 
\begin{equation}\label{eq:phi}
    \varphi=\varphi^+\otimes\varphi^0\otimes\varphi^-:{\CU}\overset{\sim}{\longrightarrow}\mathcal{O}(G^*)
\end{equation}
is an isomorphism as Poisson algebras.
    
\end{thm}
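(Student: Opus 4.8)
The plan is to reduce Theorem~\ref{thm:phi} to the already-established pieces $\varphi^\pm$ and $\varphi^0$ together with a compatibility check of Poisson brackets. We are given that $\varphi^+\colon{}_\mathbb{C}\U^+\to\mathcal{O}(U^+)$, $\varphi^-\colon{}_\mathbb{C}\U^-\to\mathcal{O}(U^-)$ and $\varphi^0\colon{}_\mathbb{C}\U^0\to\mathcal{O}(H)$ are isomorphisms of $\mathbb{C}$-algebras, and that via \eqref{eq:tt} the multiplication map induces an isomorphism of $\mathbb{C}$-algebras ${}_\mathbb{C}\U\cong{}_\mathbb{C}\U^+\otimes{}_\mathbb{C}\U^0\otimes{}_\mathbb{C}\U^-$. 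On the geometric side, \eqref{eq:og} gives $\mathcal{O}(G^*)\cong\mathcal{O}(U^+)\otimes\mathcal{O}(H)\otimes\mathcal{O}(U^-)$. Hence $\varphi=\varphi^+\otimes\varphi^0\otimes\varphi^-$ is automatically an isomorphism of $\mathbb{C}$-algebras; the entire content is that it intertwines the Poisson brackets. Since a Poisson bracket on a commutative algebra is a biderivation, it suffices to check the bracket relations on algebra generators, i.e.\ on (the images under $\overline{\ \cdot\ }$ of) $E_{\mathbf{i},k}$, $F_{\mathbf{i},l}$ and $K_\mu$ for a fixed reduced word $\mathbf{i}$.

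First I would fix the reduced expression $\mathbf{i}=(i_1,\dots,i_n)$ and record, inside $_\A\U$, the straightening/commutation relations among the root vectors $E_{\mathbf{i},k}$, $F_{\mathbf{i},l}$ and $K_\mu$. These are the ``Levendorskii--Soibelman'' type relations: for $k<l$ one has $E_{\mathbf{i},l}E_{\mathbf{i},k}-q^{(\beta_{\mathbf{i},k},\beta_{\mathbf{i},l})}E_{\mathbf{i},k}E_{\mathbf{i},l}$ lying in the span of ordered monomials in the intermediate root vectors with coefficients in $(q^{1/2}-1)\A$ (after the rescaling of generators fixed in \S\ref{sec:rd}, which is designed precisely so that the leading commutators are divisible by $q^{1/2}-1$), similarly for the $F$'s, and $K_\mu E_{\mathbf{i},k}=q^{(\mu,\beta_{\mathbf{i},k})}E_{\mathbf{i},k}K_\mu$, $K_\mu F_{\mathbf{i},k}=q^{-(\mu,\beta_{\mathbf{i},k})}F_{\mathbf{i},k}K_\mu$, plus the cross relations between $E$'s and $F$'s. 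Applying the definition \eqref{eq:poi}, each such relation specialises at $q^{1/2}=1$ to an explicit Poisson bracket among the classes $\overline{E_{\mathbf{i},k}}$, $\overline{F_{\mathbf{i},l}}$, $\overline{K_\mu}$; the quadratic terms $q^{(\beta,\gamma)}-1$ produce brackets of the form $\tfrac12(\beta,\gamma)\,\overline{E_{\mathbf{i},k}}\,\overline{E_{\mathbf{i},l}}$, and so on. I would then compute, on the $\mathcal{O}(G^*)$ side, the corresponding Poisson brackets of the coordinate functions $\chi^+_{\mathbf{i},k}$, $\chi^-_{\mathbf{i},l}$ and the characters $\mu\in\mathbb{C}[Q]\cong\mathcal{O}(H)$, using the description \eqref{eq:dg} of $\mathfrak{g}^*$ and the parametrisation \eqref{igd}, \eqref{eq:fac}; here one uses that the Poisson bivector on $G^*$ is the one dual to the standard bialgebra structure, and the factorisation coordinates $\chi^\pm_{\mathbf{i},k}$ are exactly the ones adapted to the Bruhat-type cell structure, for which these brackets are classical and known. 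Matching the two computations term by term verifies that $\varphi$ is a Poisson map on generators, hence everywhere.

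Because this is essentially \cite{DCKP}*{Theorem 7.6}, I would phrase the proof as: (i) citing De Concini--Kac--Procesi for the key commutation relations in $_\A\U$ and their specialisation, adapted from the root-of-unity setting to the generic-$\A$ setting (the arithmetic is identical, only the specialisation parameter differs); (ii) recalling from \cite{DCKP} or \cite{CP} the explicit Poisson brackets of the factorisation coordinates on $G^*$; and (iii) observing the two sets of structure constants agree under $\varphi^\pm$, $\varphi^0$. One subtlety worth flagging explicitly is the normalisation: the rescaled generators $E_i=q_i^{1/2}(q_i^{-1}-q_i)\mathbf{E}_i$ and $F_i=q_i^{-1/2}(q_i-q_i^{-1})\mathbf{F}_i$, and the overall factor $\tfrac{1}{2(q^{1/2}-1)}$ in \eqref{eq:poi}, are chosen so that the Poisson bracket lands with the correct coefficient to match the Killing-form normalisation of $\langle\langle\,,\,\rangle\rangle$ used to identify $\mathfrak{g}^*$ in \eqref{eq:dg}; I would verify this on the simplest bracket, e.g.\ $\{\overline{E_i},\overline{F_i}\}$ versus $\{\chi^+_i,\chi^-_i\}$, to pin down that no spurious scalar appears.

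The main obstacle is step (i): controlling the lower-order terms in the Levendorskii--Soibelman relations well enough to see that, after dividing by $q^{1/2}-1$ and specialising, they do not contribute to the Poisson bracket of the top-degree generators — i.e.\ that the specialised bracket is genuinely given by the ``leading'' quadratic term and the characters, with all corrections either vanishing at $q^{1/2}=1$ or being absorbed into the commutative product. This is exactly where the choice of the Berenstein--Greenstein integral form $_\A\U$ (rather than Lusztig's) and the rescaled generators is essential, and it is the technical heart of the De Concini--Kac--Procesi argument; in a self-contained write-up one would invoke the PBW basis \eqref{eq:tt} over $\A$ to expand each product unambiguously in ordered monomials, check that the relevant coefficients lie in $\A$ and compute their images in $\mathbb{C}$. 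Everything else — that $\varphi$ is an algebra isomorphism, and that a biderivation is determined by its values on generators — is formal.
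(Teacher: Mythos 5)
Your outline is correct and is essentially the argument the paper itself relies on: the paper offers no independent proof of Theorem~\ref{thm:phi}, deferring entirely to \cite{DCKP}*{Theorem 7.6} with the footnote that the root-of-unity argument transfers to the present generic setting, and that argument is exactly the Levendorskii--Soibelman/PBW strategy you describe (algebra isomorphism for free from the triangular decompositions, Poisson compatibility checked on root-vector and torus generators). The only point to pin down is the normalisation you already flag: with the bracket $\{\overline{f},\overline{g}\}=\overline{\tfrac{1}{2(q^{1/2}-1)}(fg-gf)}$ and $q^{m}-1\sim 2m(q^{1/2}-1)$, the leading term of $\{\overline{E_{\mathbf{i},l}},\overline{E_{\mathbf{i},k}}\}$ is $(\beta_{\mathbf{i},k},\beta_{\mathbf{i},l})\,\overline{E_{\mathbf{i},k}}\,\overline{E_{\mathbf{i},l}}$ rather than half of that.
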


Since $\intU$ is invariant under the action $\T_i$, for $i\in\I$, each $\T_i$ descends to a Poisson algebra automorphism on $_\mathbb{C}\U\cong\mathcal{O}(G^*)$. We use the same notation $\T_i$ to denote the induced action.

\section{Proof of the main theorem}\label{proof}

We give the proof of Theorem \ref{main} in this section. We need some preparation before final proof. Retain the notations in the previous section.

\subsection{Geometric preparations}\label{gp}
For $i\in \I_\circ$, recall the parameter $\varsigma_i$ in \S \ref{sec:SD}. Set $c_i=\overline{\varsigma_i}\in\{\pm1\}$. By \cite{Spr}, there is a unique complex group involution $\theta:G\rightarrow G$, such that,
\begin{align*}
    \theta(x_i(\xi))=x_i(\xi),\quad \theta(y_i(\xi))=y_i(\xi),\qquad\text{for $i\in\I_\bullet$, and $\xi\in\mathbb{C}$;}\\
    \theta(x_i(\xi))=\text{Ad}_{\dot{w}_\bullet}(y_{\tau i}(c_i\xi)),\quad \theta(y_i(\xi))=\text{Ad}_{\dot{w}_\bullet}(x_{\tau i}(c_i\xi)),\qquad \text{for $i\in\I_\circ$, and $\xi\in\mathbb{C}$.}
\end{align*}

Let $K=G^\theta$ be the fixed-point subgroup, and $\mathfrak{k}$ be the Lie algebra of $K$. We use the same notation $\theta$ to denote the induced Lie algebra involution on $\mathfrak{g}$. 

\begin{prop}\label{prop:coiso}
    The closed subgroup $K$ is coisotropic, that is, the complementary dual 
    \[
    \mathfrak{k}^\perp=\{f\in\mathfrak{g}^*\mid f(X)=0,\;\forall X\in\mathfrak{k}\}
    \]
    is a Lie subalgebra of $\mathfrak{g}^*$.
\end{prop}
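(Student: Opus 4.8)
The statement is that $\mathfrak{k}^\perp\subseteq\mathfrak{g}^*$ is closed under the Lie bracket of $\mathfrak{g}^*$; equivalently, $K$ is a coisotropic subgroup of the Poisson group $G$, in the sense of Drinfeld's correspondence. The plan is to work entirely at the level of Lie (bi)algebras and make the bracket on $\mathfrak{g}^*$ explicit via the Manin triple $(\mathfrak{d},\mathfrak{g},\mathfrak{g}^*)$ of \S\ref{eq:ngp}. Recall that $\mathfrak{g}^*$ is realized inside $\mathfrak{d}=\mathfrak{g}\oplus\mathfrak{g}$ as in \eqref{eq:dg}, and $\mathfrak{g}$ sits diagonally; the bracket on $\mathfrak{g}^*$ is just the restriction of the bracket of $\mathfrak{d}$. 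So I would first unwind the pairing $\langle\langle\,,\,\rangle\rangle$ to describe $\mathfrak{k}^\perp$ concretely: $f\in\mathfrak{g}^*$ corresponds to a pair $(X,Y)\in\mathfrak{b}^+\times\mathfrak{b}^-$ with the $\mathfrak{h}$-components of $X$ and $-Y$ equal, and $f$ pairs with $Z\in\mathfrak{g}$ (diagonal) via $\langle X,Z\rangle-\langle Y,Z\rangle=\langle X-Y,Z\rangle$. Hence under the identification, $\mathfrak{k}^\perp=\{(X,Y)\in\mathfrak{g}^*\mid \langle X-Y,Z\rangle=0\ \forall Z\in\mathfrak{k}\}=\{(X,Y)\in\mathfrak{g}^*\mid X-Y\in\mathfrak{k}^{\perp_{\mathrm{Kil}}}\}$, where $\mathfrak{k}^{\perp_{\mathrm{Kil}}}$ is the Killing-orthogonal complement of $\mathfrak{k}$ in $\mathfrak{g}$. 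Since $\theta$ is an involution and the Killing form is $\theta$-invariant, $\mathfrak{k}^{\perp_{\mathrm{Kil}}}=\mathfrak{p}$, the $(-1)$-eigenspace of $\theta$, and $\mathfrak{g}=\mathfrak{k}\oplus\mathfrak{p}$.

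Next I would set up the key computational fact: the involution $\theta$ on $G$ (and on $\mathfrak{g}$) extends to an involution $\widetilde\theta$ on the double $\mathfrak{d}$ that is compatible with both $\langle\langle\,,\,\rangle\rangle$ and the bracket, and whose action is controlled by the explicit formulas in \S\ref{gp} for $\theta(x_i),\theta(y_i)$. Concretely I expect $\widetilde\theta$ to act on $\mathfrak{d}=\mathfrak{g}\oplus\mathfrak{g}$ by something like $(A,B)\mapsto(\theta(A),\theta(B))$ possibly twisted by swapping the two factors composed with a Chevalley-type automorphism — the point being that the defining formulas $\theta(x_i(\xi))=\mathrm{Ad}_{\dot w_\bullet}(y_{\tau i}(c_i\xi))$ etc. are exactly what is needed to see that $\mathfrak{g}^*\subseteq\mathfrak{d}$ is $\widetilde\theta$-stable. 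Once $\mathfrak{g}^*$ is $\widetilde\theta$-stable, $\mathfrak{g}^*$ decomposes into $\pm1$ eigenspaces of $\widetilde\theta|_{\mathfrak{g}^*}$, and I would identify $\mathfrak{k}^\perp$ with one of these eigenspaces (plausibly the $+1$-eigenspace, matching the fixed subalgebra pattern). An eigenspace of a Lie algebra automorphism for the eigenvalue $+1$ is always a subalgebra, which gives the claim immediately; if instead $\mathfrak{k}^\perp$ turns out to be the $-1$-eigenspace of something, I would use the coisotropy criterion differently, e.g.\ by a direct bracket check using the $\mathfrak{k}\oplus\mathfrak{p}$ grading and the fact that $[\mathfrak{k},\mathfrak{p}]\subseteq\mathfrak{p}$, $[\mathfrak{p},\mathfrak{p}]\subseteq\mathfrak{k}$.

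As a fallback that avoids guessing the right form of $\widetilde\theta$, I would argue purely homologically: $\mathfrak{k}^\perp$ is a Lie subalgebra of $\mathfrak{g}^*$ if and only if the dual cobracket $\delta:\mathfrak{g}\to\Lambda^2\mathfrak{g}$ satisfies $\delta(\mathfrak{k})\subseteq\mathfrak{k}\wedge\mathfrak{g}$ (the infinitesimal coisotropy condition). So it suffices to check that the standard cobracket on $\mathfrak{g}$, when restricted to $\mathfrak{k}$, lands in $\mathfrak{k}\wedge\mathfrak{g}$. Writing $\delta(Z)=[Z\otimes 1+1\otimes Z,\ r]$ for the standard $r$-matrix $r=\sum_{\alpha>0}e_\alpha\wedge f_\alpha$ (up to the Cartan normalization and scaling), this reduces to a statement about how $\theta$ interacts with $r$; the hypothesis that we started from a \emph{Satake diagram} and the explicit $\theta$ of \S\ref{gp} should guarantee $(\theta\otimes\theta)(r)\equiv r \bmod (\mathfrak{k}\otimes\mathfrak{g}+\mathfrak{g}\otimes\mathfrak{k})$, i.e.\ the "standard" $r$-matrix is compatible with the symmetric pair. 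I expect \textbf{the main obstacle} to be precisely this compatibility check: controlling the term $\mathrm{Ad}_{\dot w_\bullet}$ and the $\tau$-twist in $\theta$ against the sum over positive roots in $r$, and keeping careful track of which roots are in $\Phi_\bullet$ versus $\mathcal{R}^+\setminus\mathcal{R}^+_\bullet$ — this is where the combinatorics of the Satake diagram genuinely enters, and where a naive bracket computation would otherwise bog down. Everything else (the identification of $\mathfrak{k}^\perp$ via the Killing form, the eigenspace argument) is formal.
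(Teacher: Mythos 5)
Your first route is essentially the route the paper takes: realize $\mathfrak{k}^\perp$ inside the double as $\{(X_1,X_2)\in\mathfrak{g}^*\mid X_1=\theta(X_2)\}$, i.e.\ as the intersection of $\mathfrak{g}^*$ with the $\theta$-twisted diagonal $\{(\theta(Y),Y)\mid Y\in\mathfrak{g}\}$, which is manifestly a Lie subalgebra; and your opening reduction, $\mathfrak{k}^\perp=\{(X,Y)\in\mathfrak{g}^*\mid X-Y\in\mathfrak{p}\}$ via $\theta$-invariance of the Killing form, is exactly the paper's first step. However, two points in your plan do not go through as stated. First, the involution $\widetilde\theta:(A,B)\mapsto(\theta(B),\theta(A))$ does \emph{not} preserve $\mathfrak{g}^*$ unless $\I_\bullet=\emptyset$: for $i\in\I_\bullet$ one has $\theta(\mathfrak{g}_{-\alpha_i})=\mathfrak{g}_{-\alpha_i}\not\subseteq\mathfrak{b}^+$, so there is no eigenspace decomposition of $\mathfrak{g}^*$ to appeal to. This is repairable --- you never need stability, only that the graph of the automorphism $\theta$ is a subalgebra of $\mathfrak{d}$ and that an intersection of two subalgebras is a subalgebra --- but the eigenspace argument as written is wrong in the non--quasi-split case.

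Second, and more seriously, the identity $\mathfrak{k}^\perp=\{(X_1,X_2)\in\mathfrak{g}^*\mid X_1=\theta(X_2)\}$ is the entire content of the proof, and your proposal only yields the easy inclusion $\supseteq$ (if $X_1=\theta(X_2)$ then $X_1-X_2=(\theta-1)(X_2)\in\mathfrak{p}$). Knowing that $\mathfrak{k}^\perp$ \emph{contains} a Lie subalgebra proves nothing; you must show every $(X_1,X_2)$ with $X_1-X_2\in\mathfrak{p}$ actually satisfies $X_1=\theta(X_2)$. This reverse inclusion is where the Satake combinatorics enters: the paper writes $X_1=X_1'+h$, $X_2=X_2'-h$ with $X_1'\in\mathfrak{u}^+$, $X_2'\in\mathfrak{u}^-$, $h\in\mathfrak{h}$, deduces $\theta(h)=-h$ and $\theta(X_1')-X_2'\in\mathfrak{p}$, and then kills $\theta(X_1')-X_2'$ using that $\mathfrak{g}_\alpha\subseteq\mathfrak{k}$ for $\alpha\in\Phi_\bullet$ and that $\theta(\mathcal{R}^-\smallsetminus\Phi_\bullet)\cap\mathcal{R}^-=\emptyset$. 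You correctly sense that such a root-combinatorial compatibility is ``the main obstacle,'' but you locate it only in your $r$-matrix fallback and leave it unexecuted in both routes; until it is carried out the proof is incomplete.
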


\begin{proof}
We identify $\mathfrak{g}^*$ with the subalgebra of the double $\mathfrak{d}$ as in \eqref{eq:dg}. We \emph{claim} that under the identification one has
\begin{equation}
    \mathfrak{k}^\perp=\{(X_1,X_2)\in\mathfrak{g}^*\mid X_1=\theta(X_2)\}.
\end{equation}

Let $\mathfrak{p}=\{X\in\mathfrak{g}\mid \theta(X)=-X\}$. Then $\mathfrak{g}=\mathfrak{k}\oplus\mathfrak{p}$ as vector spaces. Since the involution $\theta:\mathfrak{g}\rightarrow\mathfrak{g}$ preserves the Killing form, we have $\mathfrak{p}=\{X\in\mathfrak{g}\mid \langle X,Y\rangle=0,\;\forall \;Y\in\mathfrak{k}\}$.

Take any $x=(X_1,X_2)\in\mathfrak{g}^*$. We have
\begin{align}
    x\in\mathfrak{k} &\iff 0=\langle\langle x,y\rangle\rangle=\langle X_1-X_2,Y\rangle,\quad \text{for any $y=(Y,Y)$ with $Y\in\mathfrak{k}$},\notag\\
    & \iff X_1-X_2\in\mathfrak{g}_1,\notag\\
    & \iff \theta(X_1)-X_2=\theta(X_2)-X_1.\label{eq:thx}
\end{align}
Let us write $X_1=X_1'+h$ and $X_2=X_2'-h$, where $X_1'\in\mathfrak{u}^+$, $X_2'\in\mathfrak{u}^-$ and $h\in\mathfrak{h}$. Then the condition \eqref{eq:thx} holds if and only if $\theta(X_1')-X_2'=\theta(X_2')-X_1'$ and $\theta(h)=-h$. Therefore $\theta(X_1')-X_2'\in\mathfrak{p}$. On the other hand, by the construction of $\theta$, we have $$\theta(X_1')-X_2'\in\bigoplus_{\alpha\in\mathcal{R}^--\Phi_\bullet}\mathfrak{g}_\alpha.$$
Note that $\mathfrak{g}_\alpha\subset\mathfrak{k}$, for $\alpha\in\Phi_\bullet$, and $\theta(\mathcal{R}^--\Phi_\bullet)\cap\mathcal{R}^-=\emptyset$. We conclude that $\theta(X_1')-X_2'=0$. Therefore we have $\theta(X_1)=X_2$. This proves the claim.

The proposition follows immediately.
\end{proof}

Let $K^\perp$ be the connected closed subgroup of $G^*$ with the Lie algebra $\mathfrak{k}^\perp$. It is also a coisotropic subgroup. Hence the quotient $K^\perp\backslash G^*$ is automatically a Poisson homogeneous space of $G^*$ (\cite{Dr93}). 

In order to study the geometry of $K^\perp\backslash G^*$, let us introduce another closed subgroup  
\[
P=\{(u_+t,t^{-1}u_-)\mid u_+\in U^+_{w_\bullet},u_-\in U^-,t\in H'\}\subset G^*,
\]
where $U_{w_\bullet}^+$ is the closed subgroup of $U^+$ with the Lie algebra $\oplus_{\alpha\in\mathcal{R}_\bullet^+}\mathfrak{g}_\alpha$, and $$H'=\{t\in H\mid \alpha_i(t)=1,\,\forall i\in \I_\circ'\}.$$ 

It follows immediately from the definition that the map
\begin{equation}\label{eq:P}
U_{w_\bullet}^+\times H'\times U^-\overset{\sim}{\longrightarrow} P,\qquad (u_+,t,u_-)\mapsto (u_+t,t^{-1}u_-),
\end{equation}
defines an isomorphism as varieties.


\begin{prop}\label{prop:dec}
The map 
\begin{equation}\label{eq:dec}
K^\perp\times P\overset{\sim}{\longrightarrow} G^*,\qquad (g_1,g_2)\mapsto g_1g_2,
\end{equation}
is an isomorphism as varieties. Therefore we have $K^\perp\backslash G^*\cong P$ as varieties. 
\end{prop}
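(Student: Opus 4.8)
The plan is to verify that the multiplication map $K^\perp\times P\to G^*$ is an isomorphism by combining a dimension count with a factorisation argument on the level of the ambient group $B^+\times B^-$, where all the relevant subgroups are explicitly parametrised. First I would check the dimension bookkeeping: using \eqref{eq:dg}, $\dim\mathfrak{g}^*=\dim\mathfrak{g}=|\mathcal{R}^+|+|\mathcal{R}^-|+\dim\mathfrak h$, while $\dim\mathfrak{k}^\perp=\dim\mathfrak{g}-\dim\mathfrak{k}=\dim\mathfrak{p}$, and by the Satake combinatorics $\dim\mathfrak{p}$ equals (number of $\tau$-orbits on $\I_\circ$) $+\,|\mathcal{R}^+-\Phi_\bullet|$ (the ``restricted root space'' count, which also shows $\dim\mathfrak p = \dim\mathfrak h - \dim\mathfrak h' + |\mathcal R^+ - \mathcal R^+_\bullet|$ with $\mathfrak h' = \mathrm{Lie}(H')$). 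On the other hand \eqref{eq:P} gives $\dim P=|\mathcal{R}^+_\bullet|+\dim H'+|\mathcal R^-|$. Adding, $\dim\mathfrak{k}^\perp+\dim P=\dim G^*$, so the map \eqref{eq:dec} is a morphism between smooth irreducible varieties of the same dimension; it then suffices to prove it is bijective (or, better, that it is injective with surjective differential at the identity, plus properness, or simply bijective on $\mathbb C$-points since we are over $\mathbb C$).

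For bijectivity I would work inside $B^+\times B^-$. Write a general element of $G^*$ as $(u_+t,t^{-1}u_-)$ with $u_\pm\in U^\pm$, $t\in H$, using \eqref{igd}. For $P$ we further constrain $u_+\in U^+_{w_\bullet}$ and $t\in H'$. The group $K^\perp$ has Lie algebra $\{(X,\theta(X)):(X,\theta(X))\in\mathfrak g^*\}$ by Proposition~\ref{prop:coiso}, so on the group level $K^\perp\subset\{(g,\theta(g)):g\in\, ?\,\}\cap G^*$; concretely, since $\theta$ is the involution pinned down in \S\ref{gp}, an element $(a,\theta(a))$ lies in $G^*$ precisely when $a\in B^+$ has $\theta(a)\in B^-$, which forces $a$ to lie in a specific solvable subgroup — its $U$-part must be supported on roots $\alpha$ with $\theta(\alpha)\in\mathcal R^-$, i.e. on $\mathcal R^+ - \Phi_\bullet$ together with a torus part in $H$ with $\theta(t)=t^{-1}$. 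Given $(u_+t,t^{-1}u_-)\in G^*$, I would then solve the equation $(u_+t,t^{-1}u_-)=(a,\theta(a))\cdot(v_+s,s^{-1}v_-)$ with the second factor in $P$, by comparing components under the triangular decomposition: the $B^-$-component reads off $\theta(a)$ and $s^{-1}v_-$ uniquely by a directed induction along the root order given by the reduced word $\mathbf i$ (first peeling off $a$ from the ``large'' roots $\mathcal R^-\setminus\mathcal R^-_\bullet$, which is where $\theta(a)$ lives, leaving $s^{-1}v_-$ with $v_-\in U^-$ unconstrained), and the $B^+$-component then determines $a$, $t = $ (torus-part), $s$, $v_+$ from the $H$- and $H'$-splitting $H = \{t:\theta(t)=t^{-1}\}\cdot H'$ (which holds since $H$ has no 2-torsion obstruction in the adjoint group, or can be arranged) and from $U^+ = (U^+\cap \theta\text{-antifixed part supported on }\mathcal R^+-\Phi_\bullet)\cdot U^+_{w_\bullet}$. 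Each step is a unipotent/torus factorisation with a unique solution, so the map is bijective.

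The main obstacle I anticipate is the torus bookkeeping: showing that $H$ decomposes cleanly as $H^{\theta=\mathrm{inv}}\cdot H'$ with the intersection under control, and matching this against the $H'$ appearing in $P$ and the $H$-part of $K^\perp$. This is where the hypothesis that $G$ is of adjoint type (trivial centre) and the precise choice of $\I'_\circ$ enter, and it needs a careful argument rather than a dimension count — one has to verify the product is \emph{direct} (trivial intersection) at the level of groups, not just Lie algebras, which may require invoking connectedness of $K^\perp$ and that $H'$ was defined by exactly the characters $\alpha_i$, $i\in\I'_\circ$, that are killed on the $\theta$-anti-fixed torus. Once the group-level factorisation $G^* = K^\perp\cdot P$ with $K^\perp\cap P=\{e\}$ is established, the isomorphism of varieties is immediate because \eqref{eq:dec} is then a bijective morphism of smooth varieties whose differential at $e$ is the vector-space decomposition $\mathfrak g^* = \mathfrak k^\perp\oplus\mathrm{Lie}(P)$ coming from the dimension count above, hence an isomorphism; and the final claim $K^\perp\backslash G^*\cong P$ follows by restricting the projection $G^*\to K^\perp\backslash G^*$ to the slice $P$.
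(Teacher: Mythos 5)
Your proposal is correct and is essentially the paper's argument: the paper likewise identifies $K^\perp$ (as the identity component of $\{(b_+,b_-)\in G^*\mid b_+=\theta(b_-)\}$, parametrised by $U^+_{w_0w_\bullet}\times H_1$ with $H_1$ the anti-fixed torus) and then factors $U^+=U^+_{w_0w_\bullet}\cdot U^+_{w_\bullet}$ via \cite{Spr09}*{Lemma 8.3.5} and $H=H_1\cdot H'$ using that $G$ is adjoint, which is exactly the ``torus bookkeeping'' you flag as the remaining obstacle. The only difference is presentational: the paper obtains the isomorphism directly from these two direct-product decompositions (together with $\mathrm{Ad}_H$ preserving the unipotent factors) rather than via your dimension count plus bijectivity.
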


\begin{proof}
    Let  $U_{w_0w_\bullet}^+$ be the closed subgroups of $U^+$, with the Lie algebra $\oplus_{\alpha\in\mathcal{R}^+-\mathcal{R}_\bullet^+}\mathfrak{g}_\alpha$. Let $H_1$ be the closed connected subgroup of $H$, with the Lie algebra $\{X\in\mathfrak{h}\mid\theta(X)=-X\}$. By the proof of Proposition \ref{prop:coiso}, the subgroup $K^\perp$ is the identity component of the subgroup 
$$\widetilde{K}=\{(b_+,b_-)\in G^*\mid b_+=\theta(b_-)\}.$$ Also note that  
    $U^+_{w_\circ w_\bullet}=U^+\cap \theta(U^-).$
Therefore the map 
\begin{equation}\label{eq:deo}
U^+_{w_0w_\bullet}\times H_1 \overset{\sim}{\longrightarrow} K^\perp,\qquad (u_+,t)\mapsto (u_+t,\theta(u_+t)),
\end{equation}
defines an isomorphism as varieties.

By \cite{Spr09}*{Lemma 8.3.5} the group multiplication $U^+_{w_0w_\bullet}\times U^+_{w_\bullet}\rightarrow U^+$ is an isomorphism as varieties. Since $G$ is of adjoint type, the multiplication $H_{1}\times H'\rightarrow H$ also defines an isomorphism as varieties. 

Since the adjoint action of $H$ preserves the subgroups $U^+_{w_0w_\bullet}$ and $U^+_{w_\bullet}$, the proposition then follows from the isomorphism \eqref{igd}.
\end{proof}

By Proposition \ref{prop:dec}, the quotient variety $K^\perp\backslash G^*$ is actually affine. Therefore by \cite{Spr09}*{Exercise 5.5.9 (8)}, the coordinate ring $\mathcal{O}(K^\perp\backslash G^*)$ is isomorphic to the subalgebra 
\[\{f\in\mathcal{O}(G^*)\mid f(kg)=f(g),\;\forall\,k\in K^\perp,g\in G^*\}\]
of $K^\perp$-invariant functions in $\mathcal{O}(G^*)$. We shall make this identification throughout the paper.

Let $\iota^*:\mathcal{O}(G^*)\rightarrow\mathcal{O}(P)$ be the comorphism of the embedding $\iota:P\hookrightarrow G^*$. Thanks to Proposition \ref{prop:dec}, we conclude that the restriction map
\begin{equation}\label{eq:pro}
    \iota^*\mid_{\mathcal{O}(K^\perp\backslash G^*)}:\mathcal{O}(K^\perp\backslash G^*)\overset{\sim}{\longrightarrow}\mathcal{O}(P)
\end{equation}
defines an isomorphism as $\mathbb{C}$-algebras.

In light of the isomorphism \eqref{eq:pro}, the map $\iota^*$ provides a tool to control the size of the algebra $\mathcal{O}(K^\perp\backslash G^*)$. We also need a similar map on the quantum level.

\subsection{Quantization of the group $P$}

We firstly describe a quantization of the algebra $\mathcal{O}(P)$. Following \cite{KY}*{2.3}, define the \emph{partial parabolic subalgebra} $\U_P$ to be the unital $\Qq$-subalgebra of $\U$, generated by $F_i$ ($i\in\I$), $E_i$ ($i\in \I_\bullet$) and $K_\mu$ ($\mu\in Q^\theta$). Let $\U^+_{w_\bullet}$ be the unital subalgebra of $\U^+$ generated by $E_i$, for various $i\in\I_\bullet$. Let $\U^{0\theta}$ be the unital subalgebra of $\U^0$ generated by $K_\mu$, for $\mu\in Q^\theta$. Then the multiplication gives an isomorphism as $\Qq$-vector spaces
\begin{equation}\label{eq:upd}
\U_P\cong \U_{w_\bullet}^+\otimes \U^{0\theta}\otimes \U^-.
\end{equation}

Recall the $\A$-subalgebra $\intU$ in \S \ref{sec:rd}. Set $_\A\U_P=\U_P\cap{}_\A\U$. Similarly define $_\A\U^{0\theta}$ and $_\A\U_{w_\bullet}^+$. By the PBW-bases, the multiplication gives an isomorphism as $\A$-modules
\begin{equation}\label{eq:dea}
    _\A\U_P\cong {}_\A\U_{w_\bullet}^+\otimes {}_\A\U^{0\theta}\otimes{}_\A\U^-,
\end{equation}
where the tensor products are over $\A$.

Let $_\mathbb{C}\U_P=\mathbb{C}\otimes {}_\A\U_P$ be the commutative $\mathbb{C}$-algebra obtained by the base change $q^{1/2}\mapsto1$. Similarly define the $\mathbb{C}$-algebras $_\mathbb{C}\U^+_{w_\bullet}$, $_\mathbb{C}\U^{0\theta}$ and $_\mathbb{C}\U^-$. Thanks to \eqref{eq:dea}, we have the isomorphism as $\mathbb{C}$-algebras
\begin{equation}\label{eq:is}
    _\mathbb{C}\U_P\cong {}_\mathbb{C}{\U}^+_{w_\bullet}\otimes {}_\mathbb{C}\U^{0\theta}\otimes {}_\mathbb{C}\U^-.
\end{equation}

Recall the $\mathbb{C}$-algebra isomorphisms $\varphi^\pm:{}_\mathbb{C}\U^\pm\overset{\sim}{\rightarrow}\mathcal{O}(U^\pm)$ in \eqref{eq:sp}. The map $\varphi^+$ induces an isomorphism 
$
\varphi^+_{w_\bullet}:{}_\mathbb{C}\U_{w_\bullet}^+\overset{\sim}{\rightarrow}\mathcal{O}(U_{w_\bullet}^+).
$ Let
$
\varphi^0: {}_\mathbb{C}\U^{0\theta}\overset{\sim}{\rightarrow}\mathcal{O}(H')
$
be the $\mathbb{C}$-algebra isomorphism given by $\overline{k_i}\mapsto \alpha_i\mid_{H'}$, for $i\in \I_\circ-\I_\circ'$, and $\overline{K_i}\mapsto \alpha_i\mid_{H'}$, for $i\in \I_\bullet$.

By \eqref{eq:P}, we have the natural isomorphism $\mathcal{O}(P)\cong\mathcal{O}(U^+_{w_\bullet})\otimes \mathcal{O}(H')\otimes \mathcal{O}(U^-)$ as $\mathbb{C}$-algebras. Under this isomorphism, one gets the $\mathbb{C}$-algebra isomorphism
\[
\varphi_P=\varphi_{w_\bullet}^+\otimes\varphi^0\otimes\varphi^-:{}_\mathbb{C}\U_P\overset{\sim}{\longrightarrow} \mathcal{O}(P).
\] 
Therefore the algebra $\U_P$ provides a quantization of $\mathcal{O}(P)$.

\subsection{Quantization of the map $\iota^*$}

We next describe a quantization of the map $\iota^*:\mathcal{O}(G^*)\rightarrow\mathcal{O}(P)$.
Note that $\U_{w_\bullet}^+=\U^+\cap\T_{w_\bullet}(\U^-)$. Set $\U^{+\prime}_{w_\bullet}=\U^+\cap \T_{w_\bullet}(\U^+)$ to be another subalgebra of $\U^+$. By the PBW-bases we have the tensor product decomposition $\U^+\cong \U_{w_\bullet}^+\otimes {\U_{w_\bullet} ^{+\prime}}$. Under this isomorphism, set $$\pi_{w_\bullet}^+=id\mid_{\U^+_{w_\bullet}}\otimes\varepsilon:\U^+\longrightarrow\U^+_{w_\bullet}.$$ 

Set $Q'=\mathbb{Z}[\alpha_i\mid i\in \I_\circ']$ to be the sublattice of $Q$. Let $\U^{0}{'}$ be the unital subalgebra of $\U^0$, generated by $K_i^{\pm1}$, for $i\in \I_\circ '$. Then we have the natural isomorphism $\U^0\cong\U^{0\theta}\otimes\U^{0}{}'$. Under this isomorphism, set 
$$\pi^0=id\mid_{\U^{0\theta}}\otimes \varepsilon:\U^0\longrightarrow\U^{0\theta}.$$


Finally, under the isomorphisms \eqref{eq:upd}, set 
\begin{equation}\label{eq:pp}
    \pi=\pi_{w_\bullet}^+\otimes\pi^0\otimes id\mid_{\U^-}:\U\cong\U^+\otimes\U^0\times\U^-\longrightarrow \U_P.
\end{equation}
The map $\pi$ restricts to integral forms $_\A\pi:{}_\A\U\rightarrow {}_\A\U_P$. 

The following proposition asserts that the map $_\A\pi$ quantize the map $\iota^*$.

\begin{prop}\label{prop:cp}
    Let $_\mathbb{C}\pi:{}_\mathbb{C}\U\rightarrow{}_\mathbb{C}\U_P$ be the base change of the map $_\A\pi$. Then the diagram 
    \begin{equation}\label{eq:diag}
        \begin{tikzcd}
          & \mathcal{O}(G^*) \arrow[r,"\iota^*"] & \mathcal{O}(P) \\ & \CU \arrow[u,"\varphi"] \arrow[r,"_\mathbb{C}\pi"'] & \CU_P \arrow[u,"\varphi_P"'] 
        \end{tikzcd}
    \end{equation}
    commutes. 
\end{prop}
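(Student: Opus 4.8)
The plan is to reduce the commutativity of \eqref{eq:diag} to a statement about the three tensor factors in the decompositions \eqref{eq:tt} and \eqref{eq:upd}, and then check each factor separately. By construction $\varphi=\varphi^+\otimes\varphi^0\otimes\varphi^-$ and $\varphi_P=\varphi^+_{w_\bullet}\otimes\varphi^0\otimes\varphi^-$, while $_\mathbb{C}\pi$ is the base change of $\pi=\pi^+_{w_\bullet}\otimes\pi^0\otimes\mathrm{id}$. So the left-bottom-right composite sends a pure tensor $\overline{u^+}\otimes\overline{u^0}\otimes\overline{u^-}$ to $\varphi^+_{w_\bullet}(\overline{\pi^+_{w_\bullet}(u^+)})\cdot\varphi^0(\overline{\pi^0(u^0)})\cdot\varphi^-(\overline{u^-})$. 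The top-left composite sends it to $\iota^*\big(\varphi^+(\overline{u^+})\cdot\varphi^0(\overline{u^0})\cdot\varphi^-(\overline{u^-})\big)$, and since $\iota^*$ is an algebra homomorphism this equals $\iota^*(\varphi^+(\overline{u^+}))\cdot\iota^*(\varphi^0(\overline{u^0}))\cdot\iota^*(\varphi^-(\overline{u^-}))$. Hence it suffices to prove three identities: (i) $\iota^*\circ\varphi^+ = \varphi^+_{w_\bullet}\circ{}_\mathbb{C}\pi^+_{w_\bullet}$ on $_\mathbb{C}\U^+$; (ii) $\iota^*\circ\varphi^0 = \varphi^0\circ{}_\mathbb{C}\pi^0$ on $_\mathbb{C}\U^0$; (iii) $\iota^*\circ\varphi^- = \varphi^-$ on $_\mathbb{C}\U^-$. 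One should first note that all maps in sight are algebra homomorphisms between (commutative) $\mathbb{C}$-algebras, so each identity need only be checked on generators.

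For factor (iii), the point is purely geometric: under the isomorphism \eqref{eq:P} the group $P$ contains the full $U^-$ factor, and the embedding $\iota:P\hookrightarrow G^*$ is compatible with the $U^-$-coordinates in \eqref{igd} and \eqref{eq:og}. Thus $\iota^*$ restricted to $\mathcal{O}(U^-)\subset\mathcal{O}(G^*)$ is the identity onto the $\mathcal{O}(U^-)$-factor of $\mathcal{O}(P)$, and since $\varphi^-(\overline{F_{\mathbf{i},k}})=\chi^-_{\mathbf{i},k}$ on both sides (the same functions), (iii) is immediate. For factor (ii): $\iota^*$ restricts $\mathcal{O}(H)\cong\mathbb{C}[Q]$ to $\mathcal{O}(H')\cong\mathbb{C}[Q/\!\!\sim]$, i.e. it is dual to $H'\hookrightarrow H$, which corresponds to killing the characters $\alpha_i$ for $i\in\I_\circ'$ — exactly the lattice $Q'$. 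On the quantum side $_\mathbb{C}\pi^0$ is the base change of $\pi^0=\mathrm{id}\otimes\varepsilon$ on $\U^0\cong\U^{0\theta}\otimes\U^{0\prime}$, so $\overline{K_\mu}\mapsto 0$ whenever $\mu$ has a nonzero $Q'$-component; comparing with $\varphi^0(\overline{K_\mu})=\mu|_H$ gives (ii). One has to make sure the $Q=Q^\theta\oplus Q'$ / $H = H_1\cdot H'$ matching used in the definition of $\varphi^0$ and $\pi^0$ is the compatible one — this is a bookkeeping check using that $G$ is of adjoint type (as already used in Proposition \ref{prop:dec}).

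Factor (i) is the substantive case and I expect it to be the main obstacle. Here $\iota^*$ sends $\mathcal{O}(U^+)\subset\mathcal{O}(G^*)$ to the $\mathcal{O}(U^+_{w_\bullet})$-factor of $\mathcal{O}(P)$, which geometrically is dual to the inclusion $U^+_{w_\bullet}\hookrightarrow U^+$; using the factorization $U^+\cong U^+_{w_0w_\bullet}\times U^+_{w_\bullet}$ from Proposition \ref{prop:dec}, this restriction kills the $U^+_{w_0w_\bullet}$-coordinates and keeps the $U^+_{w_\bullet}$-coordinates. On the quantum side, choosing the reduced expression $\mathbf{i}$ of $w_0$ adapted so that an initial segment $s_{i_1}\cdots s_{i_m}$ is a reduced expression of $w_\bullet$, the PBW monomials $E_{\mathbf{i},k}$ for $k\le m$ generate $\U^+_{w_\bullet}$ and those for $k>m$ generate $\U^{+\prime}_{w_\bullet}=\U^+\cap\T_{w_\bullet}(\U^+)$; then $\pi^+_{w_\bullet}=\mathrm{id}\otimes\varepsilon$ sends $E_{\mathbf{i},k}\mapsto E_{\mathbf{i},k}$ for $k\le m$ and $E_{\mathbf{i},k}\mapsto 0$ for $k>m$. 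Comparing with $\varphi^+(\overline{E_{\mathbf{i},k}})=\chi^+_{\mathbf{i},k}$ and the fact that $\iota^*$ kills exactly the $\chi^+_{\mathbf{i},k}$ with $k>m$ (these are the $U^+_{w_0w_\bullet}$-coordinates) completes the argument. The delicate points are: verifying that $\mathbf{i}$ can be chosen with this initial-segment property (standard for reduced expressions of $w_\bullet\le w_0$), checking that the PBW-factorization of $\U^+$ indexed by such an $\mathbf{i}$ really matches the two subalgebras $\U^+_{w_\bullet}$ and $\U^{+\prime}_{w_\bullet}$ as claimed (via $\T_{w_\bullet}$ applied to positive/negative root vectors), and confirming that the geometric factorization of $U^+$ and the function-coordinates $\chi^+_{\mathbf{i},k}$ line up with the grading by whether $\beta_{\mathbf{i},k}\in\mathcal{R}^+_\bullet$ or not. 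All of these are compatibilities between the Lusztig braid-group combinatorics on $\U^+$ and the Springer factorization \cite{Spr09}*{Lemma 8.3.5} on $U^+$; once they are laid out, the identity (i) on generators — hence the commutativity of \eqref{eq:diag} — follows.
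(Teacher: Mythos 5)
Your proposal is correct and follows essentially the same route as the paper: all maps are algebra homomorphisms, so one reduces to checking the diagram on the PBW generators $\overline{E_{\mathbf{i},k}}$, $\overline{F_{\mathbf{i},k}}$, $\overline{K_i}$ for a reduced expression of $w_0$ whose initial segment is a reduced expression of $w_\bullet$. The paper declares this generator check "direct" and omits it, whereas you carry it out factor by factor; your verifications of (i)--(iii) are exactly the omitted computation.
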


\begin{proof}
    The maps $\varphi$, $\iota^*$ and $\varphi_P$ are clearly algebra homomorphisms. Since $_\mathbb{C}\U$ is commutative, the map $_\mathbb{C}\pi$ is also an algebra homomorphism. Therefore it will suffice to check the diagram for generators of $_\mathbb{C}\U$.

    Take a reduced expression $\mathbf{i}=(i_1,i_2,\cdots, i_n)$ of $w_0$, such that $(i_1,i_2,\cdots, i_{n'})$ is a reduced expression of $w_\bullet$. The elements $\overline{E_{\mathbf{i},k}}$, $\overline{F_{\mathbf{i},k}}$, and $\overline{K_i}$, for $1\leq k\leq n$ and $i\in\I$, generate the $\mathbb{C}$-algebra $_\mathbb{C}\U$. It will suffice to check the diagram when acting on these generators, which is direct and will be omitted.
\end{proof}

\subsection{Poisson generators}\label{sec:pg}

The last ingredient that is needed for the proof is a set of Poisson generators of the algebra $\mathcal{O}(K^\perp\backslash G^*)$. 

Let us consider the following elements in $\mathcal{O}(G^*)$:
\begin{itemize}
    \item[(i)] $\chi_i^+$, $\chi_i^-$, for $i\in\I_\bullet$;
    \item[(ii)] $\chi_i^--c_i\T_{w_\bullet}(\chi_{\tau i}^+)\alpha_i^{-1}$, for $i\in\I_\circ$;
    \item[(iii)] $\alpha_i^{\pm1}$, for $i\in\I_\bullet$;
    \item[(iv)] $(\alpha_i\alpha_{\tau i}^{-1})^{\pm1}$, for $i\in\I_\circ-\I_\circ'$.
\end{itemize}

\begin{lemma}\label{le:pg}
    The elements in (i)--(iv) belong to $\mathcal{O}(K^\perp\backslash G^*)$. Moreover they form a set of Poisson generators of $\mathcal{O}(K^\perp\backslash G^*)$. 
\end{lemma}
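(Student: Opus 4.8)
The plan is to verify the two assertions separately: first, that each listed element is $K^\perp$-invariant, and second, that together they Poisson-generate $\mathcal{O}(K^\perp\backslash G^*)$. For invariance, I would use the explicit description of $K^\perp$ coming from the proof of Proposition~\ref{prop:coiso}, namely that $K^\perp$ is the identity component of $\widetilde{K}=\{(b_+,b_-)\in G^*\mid b_+=\theta(b_-)\}$, together with the parametrisation \eqref{eq:deo} of $K^\perp$ by $U^+_{w_0w_\bullet}\times H_1$. Concretely, for $g=(u_+t,t^{-1}u_-)\in G^*$ and $k=(v_+s,\theta(v_+s))\in K^\perp$ with $v_+\in U^+_{w_0w_\bullet}$, $s\in H_1$, the product $kg$ can be re-factored through the isomorphism \eqref{igd}, and I would check that the coordinates (i)--(iv) are unchanged. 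The key structural input is that the ``new'' directions introduced by left multiplication by $k$ live in $U^+_{w_0w_\bullet}$ (on the $B^+$-side) and in $H_1$ (on the torus), which are precisely the directions \emph{complementary} to those recording the functions in (i)--(iv): the $\chi^+$-type coordinates in (i) and the torus coordinates in (iii)--(iv) see only the $U^+_{w_\bullet}$-part and the $H'$-part, which are untouched; and the combination in (ii) is engineered, via the defining relation $b_+=\theta(b_-)$ and the formula for $\theta$ in \S\ref{gp}, to be exactly the $K^\perp$-invariant correction of $\chi^-_i$. This is a bookkeeping computation using \eqref{eq:fac} and the commutation of the $H$-action with $U^+_{w_0w_\bullet}$ and $U^+_{w_\bullet}$; it is the part one should be careful about but it is not conceptually hard.

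\textbf{Generation.} For the second assertion I would exploit the isomorphism \eqref{eq:pro}, $\iota^*|_{\mathcal{O}(K^\perp\backslash G^*)}:\mathcal{O}(K^\perp\backslash G^*)\xrightarrow{\sim}\mathcal{O}(P)$, which reduces the claim to showing that the $\iota^*$-images of (i)--(iv) Poisson-generate $\mathcal{O}(P)$. Since $\iota^*$ is a Poisson algebra map, it suffices to identify these images inside $\mathcal{O}(P)\cong\mathcal{O}(U^+_{w_\bullet})\otimes\mathcal{O}(H')\otimes\mathcal{O}(U^-)$ and check they generate the \emph{algebra} $\mathcal{O}(P)$ already as a Poisson algebra. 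Restricted to $P=\{(u_+t,t^{-1}u_-)\mid u_+\in U^+_{w_\bullet},\,t\in H',\,u_-\in U^-\}$, one has $\iota^*\T_{w_\bullet}(\chi^+_{\tau i})=0$ for $i\in\I_\circ$ because $\T_{w_\bullet}(\chi^+_{\tau i})$ records a coordinate in the $U^+_{w_0w_\bullet}$-direction, which is trivial on $P$; hence the image of (ii) is simply $\iota^*\chi^-_i$ for $i\in\I_\circ$. So the images of (i)+(ii) give the functions $\chi^-_i$ for \emph{all} $i\in\I$ together with $\chi^+_i$ for $i\in\I_\bullet$, the images of (iii)+(iv) give enough characters of $H'$ (using that $\alpha_i|_{H'}$ for $i\in\I_\bullet$ and $(\alpha_i\alpha_{\tau i}^{-1})|_{H'}$ for $i\in\I_\circ-\I_\circ'$ span the character lattice of $H'$, since $H'$ is cut out by $\alpha_i=1$, $i\in\I_\circ'$). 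These are algebra generators of $\mathcal{O}(U^-)\otimes\mathcal{O}(H')$ and of the simple-root part of $\mathcal{O}(U^+_{w_\bullet})$. To recover \emph{all} of $\mathcal{O}(U^+_{w_\bullet})$ I would take Poisson brackets: the higher PBW-type coordinates $\chi^+_{\mathbf{i},k}$ for $\beta_{\mathbf{i},k}\in\mathcal{R}^+_\bullet$ non-simple are obtained from the simple ones $\chi^+_j$, $j\in\I_\bullet$, by iterated Poisson brackets, which follows by transporting, via Theorem~\ref{thm:phi} and the commuting square of Proposition~\ref{prop:cp}, the fact that $_\mathbb{C}\U^+_{w_\bullet}$ is Poisson-generated by the $\overline{E_j}$, $j\in\I_\bullet$ — and this in turn follows from the quantum Serre relations / the fact that the $E_{\mathbf{i},k}$ are iterated $q$-commutators of the $E_j$, which specialise to iterated Poisson brackets.

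\textbf{Main obstacle.} The delicate point is the generation statement for the unipotent directions, i.e.\ showing that the non-simple root coordinates $\chi^+_{\mathbf{i},k}$ ($\beta_{\mathbf{i},k}\in\mathcal{R}^+_\bullet$) and $\chi^-_{\mathbf{i},k}$ ($\beta_{\mathbf{i},k}\in\mathcal{R}^-$) lie in the Poisson subalgebra generated by the simple ones listed in (i)--(ii). The cleanest route is to pull everything back to the quantum side: in $_\A\U_P$ the PBW generators $\T_{i_1}\cdots\T_{i_{k-1}}(E_{i_k})$ and $F_{\mathbf{i},k}$ are expressible through the braid group action $\T_i$, and — crucially — for $i\in\I_\bullet$ the operator $\T_i$ preserves the $\imath$quantum group, so these iterated brackets stay inside the relevant subalgebra after specialisation. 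For the $\chi^-$-directions one must further use that the defining relations of $\U^-$ (quantum Serre relations) exhibit every $F_{\mathbf{i},k}$ as an iterated $q$-bracket of the $F_j$, $j\in\I$, so upon specialisation every $\chi^-_{\mathbf{i},k}$ is an iterated Poisson bracket of the $\chi^-_j=\varphi^-(\overline{F_j})$; combined with the already-available $\chi^-_i$ for $i\in\I_\circ$ (from (ii)) and $i\in\I_\bullet$ (from (i)), this yields all of $\mathcal{O}(U^-)$. Assembling the three tensor factors and invoking \eqref{eq:pro} then finishes the argument; I would present the invariance check first, then this generation argument, keeping the quantum-side computations as the technical core.
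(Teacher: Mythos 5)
Your overall strategy coincides with the paper's: use the decomposition $G^*\cong K^\perp\times P$ of Proposition~\ref{prop:dec} and the restriction isomorphism \eqref{eq:pro} to reduce both claims to computations on $P$, and then treat the three tensor factors of $\mathcal{O}(P)$ separately. The invariance check (the paper phrases it as computing $\pi_2^*$ of coordinates on $P$ rather than verifying $f(kg)=f(g)$ directly, but the refactoring computation is the same), the $H'$-part, and the $U^+_{w_\bullet}$-part are sound; for the latter, the functions $\chi_j^+$, $j\in\I_\bullet$, are themselves among the listed elements, so taking iterated Poisson brackets in $\mathcal{O}(G^*)$ legitimately produces all of $\mathcal{O}(U^+_{w_\bullet})$ inside the Poisson subalgebra they generate.

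The gap is in the $\mathcal{O}(U^-)$ step. You assert that the images of (i)+(ii) ``give the functions $\chi^-_i$ for all $i\in\I$'' and then generate $\mathcal{O}(U^-)$ by iterated Poisson brackets, ``combined with the already-available $\chi^-_i$ for $i\in\I_\circ$ (from (ii))''. But $\chi_i^-$ for $i\in\I_\circ$ is \emph{not} available: the element of the Poisson subalgebra is $b_i=\chi_i^--c_i\T_{w_\bullet}(\chi_{\tau i}^+)\alpha_i^{-1}$, and only its restriction to $P$ equals $\chi_i^-\mid_P$. Poisson brackets must be computed upstairs in $\mathcal{O}(G^*)$ --- $\mathcal{O}(P)$ carries no intrinsic bracket, only the one transported through \eqref{eq:pro} --- and the correction terms contribute even after restriction to $P$: for instance $\{\chi_i^-,\T_{w_\bullet}(\chi_{\tau j}^+)\alpha_j^{-1}\}$ produces torus terms (from the specialisation of $[F_i,E_i]\sim(q_i^{-1}-q_i)(K_i-K_i^{-1})$) which in general do not vanish on $P$. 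Hence $\iota^*\{b_i,b_j\}\neq\{\chi_i^-,\chi_j^-\}\mid_P$, and the naive transport of the ``iterated $q$-commutator'' argument breaks down. The paper closes exactly this gap with the $\mathbb{Z}[\I]$-grading on $\mathcal{O}(G^*)$: for any Poisson polynomial $f$ one has $f(b_{i_1},\dots,b_{i_k})=f(\chi_{i_1}^-,\dots,\chi_{i_k}^-)+(\text{lower-order terms})$ with respect to the partial order on $\mathbb{Z}[\I]$, and an induction on the degree then shows that $\iota^*$ of the generated subalgebra still contains all of $\mathcal{O}(U^-)\subset\mathcal{O}(P)$. Your proof needs this filtration argument (or an equivalent device) to be complete.
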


\begin{proof}
    Let $\pi_2:G^*\rightarrow P$ be the projection map onto $P$ under the isomorphism \eqref{eq:dec}. By Proposition \ref{prop:dec}, the image of the comorphism $\pi_2^*:\mathcal{O}(P)\rightarrow\mathcal{O}(G^*)$ is exactly $\mathcal{O}(K^\perp\backslash G^*)$. 
    
    In order to compute images of various functions under $\pi_2^*$, we give a precise description on the map $\pi_2$. Take any element $(u_+t,t^{-1}u_-)$ in $G^*$. Decompose $u_+=u_+'u_+''$, with $u_+'\in U^+_{w_0w_\bullet}$ and $u_+''\in U^+_{w_\bullet}$. Decompose $t=t't''$, with $t'\in H_0$ and $t''\in H'$. It is direct to verify that 
\begin{equation}\label{eq:p2}
    \pi_2\big((u_+t,t^{-1}u_-)\big)=\big(\text{Ad}_{t'^{-1}}(u_+'')t'',t''^{-1}\text{Ad}_t(\theta(u_+'))^{-1}u_-\big).
\end{equation} 

Let us identify regular functions on $U^+$, $U_{w_\bullet}^-$ and $H'$ as regular functions on $P$, via the isomorphism $P\cong U^+_{w_\bullet}\times H'\times U^-$ \eqref{eq:P}. 

{\it (a) For $i\in \I_\bullet$, we have $\pi_2^*(\chi_{i}^+\mid_{U_{w_\bullet}^+})=\chi_i^{+}$ and $\pi_2^*(\chi_{i}^-)=\chi_i^{-}$.}

Take $i\in\I_\bullet$, and $(u_+t,t^{-1}u_-)\in G^*$. By \eqref{eq:p2}, we have 
    \[
    \pi_2^*(\chi_{i}^+\mid_{U_{w_\bullet}^+})\big((u_+t,t^{-1}u_-)\big)=\chi_{i}^+\big(\text{Ad}_{t'^{-1}}(u_+'')\big)=\alpha_i(t'^{-1})\chi_i^+(u_+).
    \]
    Note that $\alpha_i(t')=1$, because $\theta(\alpha_i)=\alpha_i$ and $\theta(t')=t'^{-1}$. Therefore we have $\pi_2^*(\chi_{P,i}^+\mid_{U_{w_\bullet}^+})=\chi_i^+$. 
    
    Since $\theta(U_{w_0w_\bullet}^+)\cap U^-_{w_\bullet}=\{e\}$, we deduce that
    \[
    \pi_2^*(\chi_{i}^-)\big((u_+t,t^{-1}u_-)\big)=\chi_{i}^-\big(\text{Ad}_t(\theta(u_+'))^{-1}u_-\big)=\chi_i^{-}(u_-).
    \]
    This implies $\pi_2^*(\chi_{i}^-)=\chi_i^-$.

{\it (b) For $i\in \I_\circ$, we have $\pi_2^*(\chi_{P,i}^-)=\chi_i^--c_i\T_{w_\bullet}(\chi_{\tau i}^+)\alpha_i^{-1}$.}

Take $i\in\I_\circ$, and $(u_+t,t^{-1}u_-)\in G^*$. Then by \eqref{eq:p2} one has
    \[
    \pi_2^*(\chi_{i}^-)\big((u_+t,t^{-1}u_-)\big)=\chi_{i}^-\big(\text{Ad}_t(\theta(u_+'))^{-1}u_-\big)=\chi_i^-(u_-)+\chi_i^-(\text{Ad}_t(\theta(u_+'^{-1}))).
    \]
    Take a reduced expression $\mathbf{i}=(i_1,i_2,\cdots, i_n)$ of $w_0$, such that $(i_1,i_2,\cdots, i_{n'})$ is a reduced expression of $w_\bullet$, and $i_{n'+1}=\tau i$. Write $u_+'=x_{\mathbf{i},n}(a_n)x_{\mathbf{i},n-1}(a_{n-1})\cdots x_{\mathbf{i},1}(a_1)$, for $a_k\in\mathbb{C}$. Then $u_+'\in U_{w_0w_\bullet}^+$. Hence we have $a_k=0$ for $1\leq k\leq n'$. Therefore 
    \begin{equation*}
    \begin{split}
    \theta(u_+'^{-1})&=\theta(x_{\mathbf{i},n'+1}(a_{n'+1}))\theta(x_{\mathbf{i},n'+2}(a_{n'+2}))\cdots \theta(x_{\mathbf{i},n}(a_n))\\
    &=y_i(c_ia_{n'+1})\theta(x_{\mathbf{i},n'+2}(a_{n'+2}))\cdots \theta(x_{\mathbf{i},n}(a_n))
    \end{split}
    \end{equation*}
    is a factorisation as in \eqref{eq:fac}. Hence 
    \[
    \chi_i^-(\text{Ad}_t(\theta(u_+'^{-1})))=\chi_i^-(y_i(\alpha_i^{-1}(t)c_ia_{n'+1}))=c_i\alpha_i^{-1}(t)a_{n'+1}=c_i\alpha_i^{-1}(t)\T_{w_\bullet}(\chi_{\tau i}^+)(u_+).
    \]
    This implies that $\pi_2^*(\chi_{P,i}^-)=\chi_i^--c_i\T_{w_\bullet}(\chi^+_{\tau i})\alpha_i^{-1}$.

    {\it (c) For $i\in\I_\bullet$, we have $\pi_2^*(\alpha_{i}\mid_{H'})=\alpha_i$. For $i\in\I_\circ-\I_\circ'$, we have $\pi_2^*(\alpha_{i}\mid_{H'})=\alpha_i\alpha_{\tau i}^{-1}A_i$, where $A_i$ is a monomial of the form $\prod_{i\in\I_\bullet}\alpha_i^{n_i}$ with $n_i\in\mathbb{Z}$.}

    It follows from the similar argument. Statement (c) is actually easier to prove since we only need to consider elements in the torus. We skip the details.

    Thanks to the statements (a)--(c), we conclude that elements in (i)--(iv) belong to the image of $\pi_2^*$ and hence belong to $\mathcal{O}(K^\perp\backslash G^*)$. 
    
    Next we show that these elements form a set of Poisson generators.

    Let $\mathcal{O}'$ be the Poisson subalgebra of $\mathcal{O}(G^*)$ generated by elements in (i)--(iv). By the previous argument we have $\mathcal{O}'\subset\mathcal{O}(K^\perp\backslash G^*)$. Recall that the restriction of the map $\iota^*:\mathcal{O}(G^*)\rightarrow\mathcal{O}(P)$ to $\mathcal{O}(K^\perp\backslash G^*)$ is an isomorphism. In order to show $\mathcal{O}'=\mathcal{O}(K^\perp\backslash G^*)$, it suffices to show that $\iota^*(\mathcal{O}')=\mathcal{O}(P)$. Still let us identify $\mathcal{O}(U_{w_\bullet}^+)$, $\mathcal{O}(H')$ and $\mathcal{O}(U^-)$ as the subalgebras of $\mathcal{O}(P)$. It suffices to show that $\iota^*(\mathcal{O}')$ contains these subalgebras.

Firstly, by the proof of \cite{DCKP}*{Theorem 7.6}, the algebra $\mathcal{O}(U_{w_\bullet}^+)$ is generated by functions $\chi_{i}^+\mid_{U_{w_\bullet}^+}$, for $i\in\I_\bullet$, as a Poisson algebra. Therefore as subalgebras of $\mathcal{O}(G^*)$, we have $\mathcal{O}'\supset \mathcal{O}(U_{w_\bullet}^+)$, thanks to (a). This implies $\iota^*(\mathcal{O}')\supset \mathcal{O}(U_{w_\bullet})$, where $\mathcal{O}(U_{w_\bullet})$ on the right hand side is viewed as a subalgebra of $\mathcal{O}(P)$. 

Next, the algebra $\mathcal{O}(H')$ is generated by $(\alpha_{i}\mid_{H'})^{\pm1}$, for $i\in\I-\I_\circ'$. Thanks to (c) we conclude that $\iota^*(\mathcal{O}')$ contains $\mathcal{O}(H')$.

Lastly, still by the proof of \cite{DCKP}*{Theorem 7.6}, as a Poisson algebra $\mathcal{O}(U^-)$ is generated by $\chi_i^-$, for $i\in\I$. Write $b_i=\chi_i^-$, for $i\in\I_\bullet$, and $b_i=\chi_i^--c_i\T_{w_\bullet}(\chi_{\tau i}^+)\alpha_i^{-1}$, for $i\in\I_\circ$, to be elements in $\mathcal{O}(G^*)$. Let $f(x_1,x_2,\cdots,x_k)$ be a Poisson polynomial, that is, a polynomial possibly with Poisson brackets. Note that there is a $\mathbb{Z}[\I]$-grading on the algebra $\mathcal{O}(G^*)$ respecting the Poisson brackets, where $\text{deg}(\chi_i^-)=i$, $\text{deg}(\chi_i^+)=-i$ and $\text{deg}(\mu)=0$, for $i\in\I$ and $\mu\in Q$. We define a partial order on $\mathbb{Z}[\I]$ by setting $\mu\leq\mu'$ if $\mu'-\mu$ is a non-negative combination of various $i\in \I$. Then for any $i_1,i_2,\cdots,i_k$ in $ \I$, we have $$f(b_{i_1},b_{i_2},\cdots,b_{i_k})=f(\chi_{i_1}^-,\chi_{i_2}^-,\cdots,\chi_{i_k}^-)+\text{lower terms}$$
    in $\mathcal{O}(G^*).$ Therefore we deduce that $\iota^*(\widetilde{\mathcal{O}})$ contains $\mathcal{O}(U^-)\subset\mathcal{O}(P)$ by induction on the degree. 
    
    We complete the proof of the lemma.
\end{proof}

\subsection{The final
proof}

We are now ready to prove the main theorem of the paper.

\begin{proof}[Proof the Theorem \ref{main}]
    The uniqueness of $\varphi^\imath$ is clear. We show the existence. Consider the following diagram
\begin{equation}\label{diag}
    \begin{tikzcd}
            & \mathcal{O}(K^\perp\backslash G^*) \arrow[r,hook] & \mathcal{O}(G^*) \arrow[r,"\iota^*"] & \mathcal{O}(P) \arrow[l,bend left,"\pi_2^*"]\\
            & _\mathbb{C}\U^\imath \arrow[u,"\varphi^\imath"] \arrow["_\mathbb{C}\iota"',r] & {}_\mathbb{C}\U \arrow[u,"\varphi"'] \arrow[r,"_\mathbb{C}\pi"'] & _\mathbb{C}\U_P \arrow[u,"\varphi_P"']
        \end{tikzcd}
\end{equation}

Let $\widetilde{\mathcal{O}}\subset\mathcal{O}(G^*)$ be the image of the map $\varphi\circ{}_\mathbb{C}\iota:{}_\mathbb{C}\U^\imath\rightarrow\mathcal{O}(G^*)$. 

{\it (a) We have $\widetilde{\mathcal{O}}\supset{}\mathcal{O}(K^\perp\backslash G^*)$.}

Recall the generators $B_i$ ($i\in\I$), $E_i$ ($i\in\I_\bullet$), $K_i^{\pm1}$ ($i\in\I_\bullet$) and $k_i$ ($i\in\I_\bullet$) of the subalgebra $\U^\imath$ in \S \ref{sec:SD}. It is clear that these elements belong to the integral form $_\A\U^\imath$. Moreover these elements specialise exactly to the elements (i)--(iv) in \S \ref{sec:pg}. Since the maps $_\mathbb{C}\iota$ and $\varphi$ are Poisson, $\widetilde{\mathcal{O}}$ is a Poisson subalgebra of $\mathcal{O}(G^*)$. Then (a) follows from Lemma \ref{le:pg}.  

{\it (b) The map $_\mathbb{C}\pi\circ{}_\mathbb{C}\iota:{}_\mathbb{C}\U\rightarrow{}_\mathbb{C}\U_P$ is surjective.}

Recall that $\varphi_P$ is an isomorphism. Therefore it suffices to show that $\varphi_P\circ{}_\mathbb{C}\pi\circ{}_\mathbb{C}\iota$ is surjective. By Proposition \ref{prop:cp}, we have $\varphi_P\circ{}_\mathbb{C}\pi\circ{}_\mathbb{C}\iota=\iota^*\circ\varphi\circ{}_\mathbb{C}\iota$. Recall that $\iota^*({}\mathcal{O}(K^\perp\backslash G^*))=\mathcal{O}(P)$. Then (b) follows from (a).

{\it (c) The map $_\mathbb{C}\pi\circ{}_\mathbb{C}\iota:{}_\mathbb{C}\U\rightarrow{}_\mathbb{C}\U_P$ is an isomorphism.}

Since $_\A\U_P$ is a free $\A$-module, we take an $\A$-basis $\mathrm{B_P}$ of $_\A\U_P$. Then $\mathrm{B}_P$ is a $\Qq$-basis of $\U_P$. Thanks to \cite{KY}*{Lemma 2.10}, the Letzter's map $\pi^\imath=\pi\mid_{\U^\imath}:\U^\imath\rightarrow \U_P$ is an isomorphism as vector spaces. Therefore, the set $(\pi^\imath)^{-1}(\mathrm{B}_P)\subset\U^\imath$ gives a $\Qq$-basis of $\U^\imath$. For $b\in \mathrm{B}_P$, let $d_b\in\mathbb{C}[q^{1/2}]$ be the monic polynomial with the lowest degree, such that $d_b(\pi^\imath)^{-1}(b)\in{}_\A\U^\imath$. Then $\{d_b(\pi^\imath)^{-1}(b)\mid b\in\mathrm{B}\}$ is an $\A$-basis of $_\A\U^\imath$. Hence the map $_\mathbb{C}\pi^\imath={}_\mathbb{C}\pi\circ{}_\mathbb{C}\iota:{}_\mathbb{C}\U^\imath\rightarrow{}_\mathbb{C}\U_P$, obtained by the base change of the map $\pi^\imath\mid_{{}_\A\U^\imath}:{}_\A\U^\imath\rightarrow{}_\A\U_P$, is an isomorphism if $d_b(1)\neq 0$, for any $b\in \mathrm{B}$. Suppose otherwise $d_b(1)=0$ for some $b\in \mathrm{B}$. Then $\overline{b}\in{}_\mathbb{C}\U_P$ does not belong to the image of $_\mathbb{C}\pi^\imath$, which contradicts with the surjectivity established in (b). This completes the proof of (c).

{\it (d) We have $\widetilde{\mathcal{O}}={}\mathcal{O}(K^\perp\backslash G^*)$. Hence there exists $\varphi^\imath$ which makes the diagram \eqref{diag} commutes.}

By (c) and the Proposition \ref{prop:cp}, we deduce that $\iota^*\circ\varphi\circ{}_\mathbb{C}\iota=\varphi_P\circ{}_\mathbb{C}\pi\circ{}_\mathbb{C}\iota$ is an isomorphism. In particular $\iota^*\mid_{\widetilde{\mathcal{O}}}$ is injective. Since $\iota^*\mid_{{}^K\mathcal{O}(G^*)}$ is an isomorphism, (d) follows from (a).

{\it (e) The map $\varphi^\imath$ is an isomorphism as Poisson algebras.}

By the commuting diagram \eqref{diag}, we have $\varphi^\imath=\pi_2^*\circ\varphi_P\circ{}_\mathbb{C}\pi\circ{}_\mathbb{C}\iota$. Since the maps $\pi_2^*$, $\varphi_P$ and $_\mathbb{C}\pi\circ{}_\mathbb{C}\iota$ are isomorphisms, we deduce $\varphi^\imath$ is an isomorphism. It is moreover Poisson since $_\mathbb{C}\iota$ and $\varphi$ are Poisson.

We complete the proof of the theorem.
\end{proof}

\section{Example: Dubrovin--Ugaglia Poisson structures}\label{ap}

Let $G=\text{PSL}_n(\mathbb{C})$ and $\theta:G\rightarrow G$ be the map given by $g\mapsto{}^Tg^{-1}$. Then $K=\text{PSO}_n(\mathbb{C})$, and 
\[
K^\perp=\{(ut,{}^Tu^{-1}t^{-1}){}\mid u\in U^+,t\in H\}.
\]
One has an isomorphism as varieties
\begin{equation}\label{du}
K^\perp\backslash G^*\overset{\sim}{\longrightarrow} U^+;\quad K^\perp(tu_+,t^{-1}u_-)\mapsto {}^Tu_-u_+.
\end{equation}

The unipotent subgroup $U^+$ consists of $(n\times n)$-upper triangular matrices with 1 on the diagonal. When viewing $U^+$ as the space of Stokes matrices in the study of Frobenius manifolds, it is naturally equipped with a Poisson structure, called the Dubrovin--Ugaglia Poisson structure (\cites{Du,Ug}). Boalch \cite{Bo} relates such Poisson structure with the stable locus Poisson structure of $G^*$, and then Xu \cite{Xu} interprets the Poisson structure as a Poisson homogeneous space of $G^*$. By \cite{Xu}*{Theorem 5.12}, up to rescaling of Poisson brackets, \eqref{du} is an isomorphism as Poisson manifolds .

Let $\U^\imath$ be the $\imath$quantum group associated to the symmetric pair $(\mathfrak{sl}_n,\mathfrak{so}_n)$ and let $_\mathbb{C}\U^\imath$ be the $\mathbb{C}$-algebra obtained by the base change as before. By Theorem \ref{main} and the isomorphism \eqref{du}, we get the Poisson algebra isomorphism 
\begin{equation}\label{eq:du}
    _\mathbb{C}\U^\imath\overset{\sim}{\longrightarrow} \mathcal{O}(U^+).
\end{equation}

It is also known that the braid group acts naturally on the space $U^+$ via Poisson automorphisms \cites{Du,Ug}. On the other hand, by the work \cite{WZ} the $\imath$quantum groups also admit braid group actions, generalising Lusztig's braid group actions on the quantum groups. Thanks to \cite{So}*{Proposition 3.6}, for the $\imath$quantum group associated to $(\mathfrak{sl}_n,\mathfrak{so}_n)$, the braid group action actually preserves the integral form $_\A\U^\imath$. Therefore it induces the action on $_\mathbb{C}\U$. It is then direct to verify that the isomorphism \eqref{eq:du} is moreover equivariant with respect to the braid group actions. Therefore the braid group symmetries on the Dubrovin--Ugaglia Poisson structure can be understood from the theory of quantum symmetric pairs.

\end{document}